\documentclass[twoside]{article}
\usepackage{enumerate,mathrsfs,amsfonts,amsxtra,latexsym,amssymb,amsthm,verbatim,amsmath,graphicx,dsfont,extarrows}
\usepackage{mathpazo}
\usepackage[dvipdfm, colorlinks, linkcolor=green, anchorcolor=blue, citecolor=red]{hyperref}
\allowdisplaybreaks[4]
\textwidth=160truemm
\textheight=215truemm
\headsep=4truemm
\topmargin=0pt
\oddsidemargin=0pt
\evensidemargin=0pt
\parindent=16pt
\setcounter{page}{1}

\floatsep=0pt
\textfloatsep=0pt
\intextsep=0pt
\catcode`@=12
\newtheorem{theorem}{Theorem}[section]
\newtheorem{definition}[theorem]{Definition}
\newtheorem{lemma}[theorem]{Lemma}
\newtheorem{corollary}[theorem]{Corollary}
\newtheorem{proposition}[theorem]{Proposition}
\begin{document}
\abovedisplayskip=6pt plus 1pt minus 1pt \belowdisplayskip=6pt
plus 1pt minus 1pt
\thispagestyle{empty} \vspace*{-1.0truecm} \noindent
\vskip 10mm

\begin{center}{\large Bowling ball representation of virtual string links} \end{center}

\vskip 5mm
\begin{center}{Zhiyun Cheng\\
{\small School of Mathematical Sciences, Beijing Normal University
\\Laboratory of Mathematics and Complex Systems, Ministry of
Education, Beijing 100875, China
\\(email: czy@bnu.edu.cn)}}\end{center}

\vskip 1mm

\noindent{\small {\small\bf Abstract} In this paper we investigate the virtual string links via a probabilistic interpretation. This representation can be used to distinguish some virtual string links from classical string links. In order to study the algebraic structure behind this probabilistic interpretation we introduce the notion of virtual flat biquandle. The cocycle invariants associated with virtual flat biquandle is discussed.
\ \

\vspace{1mm}\baselineskip 12pt

\noindent{\small\bf Keywords} virtual string link; virtual flat biquandle\ \

\noindent{\small\bf MR(2010) Subject Classification} 57M25, 57M27\ \ {\rm }}

\vskip 1mm

\vspace{1mm}\baselineskip 12pt

\let\thefootnote\relax\footnote{The author is supported by NSFC 11301028 and the Fundamental Research Funds for Central Universities of China 105-105580GK.}

\section{Introduction}
In his seminal paper \cite{Jon1987}, Jones mentioned a probabilistic interpretation of the unreduced Burau representation of positive braids. This interpretation was generalized by Xiaosong Lin, Feng Tian and Zhenghan Wang \cite{Lin1998} to a representation of the monoid of string links. More precisely they assigned each string link a Burau matrix. As an application, in \cite{Lin2001} Xiaosong Lin and Zhenghan Wang used this model of random walks on knot diagrams to give an alternative proof of the Melvin-Morton conjecture, which was first settled by D. Bar-Natan and S. Garoufalidis in \cite{Bar1996} via finite type invariants. Later this Burau matrix was generalized to a 2-variable Burau matrix by Daniel S. Silver and Susan G. Williams in \cite{Sil2001}. Recently by considering the case of several bowling balls on a knot diagram simultaneously \cite{Big2014}, Stephen Bigelow obtained a cabled version of the Temperley-Lieb representation.

In this paper, we study the virtual string links by bowling a ball along the tangled lanes. We consider all the possibilities when the ball meets a $($real or virtual$)$ crossing point, then we discuss the constrains deduced from the generalized Reidemeister moves. We remark that in general this is not a realistic probability model except some very special cases. Similar to the Burau matrix introduced in \cite{Lin1998}, in Section 2 each virtual string link is associated with an $n\times n$ matrix and each entry of it takes the value in $\mathds{Z}[s]/(s^2)$, here $n$ denotes the number of strands of the link. Next we study the general algebraic structure of this probabilistic interpretation and introduce the notion of virtual flat biquandle, which is closely related to the flat biquandle studied in \cite{Hen2010} and \cite{Kau2012}. Finally an enhancement of the counting invariant associated with virtual flat biquandle is discussed in Section 5.

\section{Bowling ball representation of virtual string links}
By a \emph{virtual $n$-string link diagram}, we mean a collection of $n$ immersed strings in the strip $\mathds{R}\times [0, 1]$ such that the $i$-th string gives an oriented path from $(i, 1)$ to $(\pi(i), 0)$, here $\pi$ denotes a permutation of $\{1, \cdots, n\}$. Each crossing of this diagram is either real or virtual. A \emph{virtual $n$-string link} is an equivalence class of virtual $n$-string link diagrams under generalized Reidemeister moves, see Figure 1. Similar to the virtual knots which is an extension of classical knots \cite{Kau1999}, virtual string links can be regarded as the virtual version of the classical string links \cite{Lin1998}. Obviously the set of all virtual $n$-string links has a monoid structure. In particular, if each strand meets $\mathds{R}\times t$ $(0<t<1)$ transversely at one point then we obtain the virtual braid $VB_n$ \cite{Kau1999,Kam2007}.
\begin{center}
\includegraphics{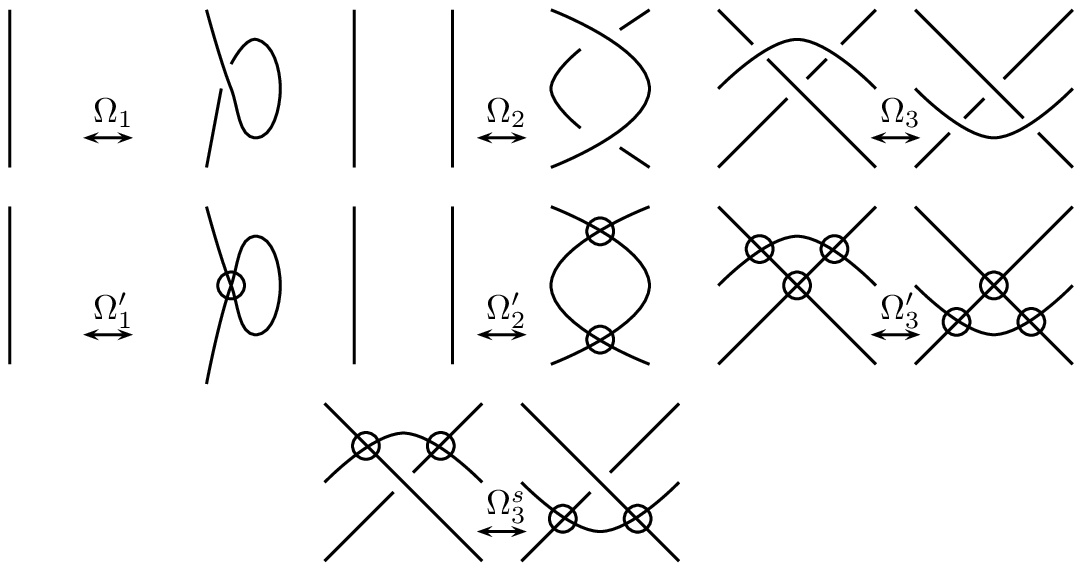}
\centerline{\small Figure 1}
\end{center}

We give an example of virtual 2-string link in Figure 2.
\begin{center}
\includegraphics{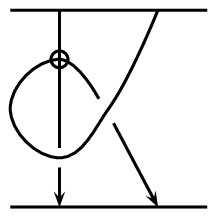}
\centerline{\small Figure 2}
\end{center}

Now we state the bowling ball representation of virtual string links. As we mentioned before, although for most cases the model we describe below is not a realistic probability model, we will still use the word ``probability"$($or weight$)$ for convenience. With a fixed virtual $n$-string link diagram $L$, we can imagine it as a bowling alley with $n$ lanes. If we put a bowling ball at $(i, 1)$ $(i\in \{1, \cdots, n\})$, then this ball will travel along the lane according to the orientation and behave according to the following rules:
\begin{enumerate}
  \item If we come to a negative crossing on the upper lane, the ball jumps down with probability $1-t$ and keeps walking with probability $t$.
  \item If we come to a negative crossing on the lower lane, the ball jumps up with probability $1-u$ and keeps walking with probability $u$.
  \item If we come to a positive crossing on the lower lane, the ball jumps up with probability $1-w$ and keeps walking with probability $w$.
  \item If we come to a positive crossing on the upper lane, the ball jumps down with probability $1-v$ and keeps walking with probability $v$.
  \item If we come to a virtual crossing from the left side, the ball jumps to the other lane with probability $s$ and keeps walking with probability $1-s$
  \item If we come to a virtual crossing from the right side, the ball jumps to the other lane with probability $r$ and keeps walking with probability $1-r$
\end{enumerate}

We illustrate all these possibilities in Figure 3.
\begin{center}
\includegraphics{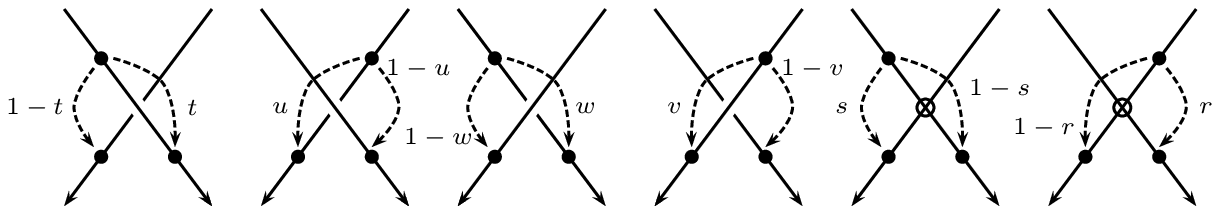}
\centerline{\small Figure 3}
\end{center}

Now for a fixed virtual $n$-string link diagram $L$, we can associate a matrix $M(L)$ with $L$ by defining the $(i, j)$-th entry to be the probability that a ball begins in the $i$-th lane and ends up in the $j$-th lane. In order to make $M(L)$ does not depend on the choice of the diagram, we need to study the constrains deduced from the generalized Reidemeister moves.

$\Omega_2:$
\begin{center}
\includegraphics{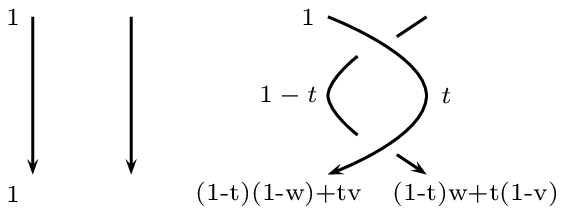}
\centerline{\small Figure 4}
\end{center}
From Figure 4 we conclude that
\begin{center}
$\begin{cases}
(1-t)(1-w)+tv=1\\
(1-t)w+t(1-v)=0,
\end{cases}$
\end{center}
which follows by
\begin{equation}
(w+v-1)t=w.
\label{1}
\end{equation}
Similarly if we put the ball on the lower lane of Figure 4, we will obtain that
\begin{equation}
(w+v-1)u=v.
\label{2}
\end{equation}

$\Omega_3:$
\begin{center}
\includegraphics{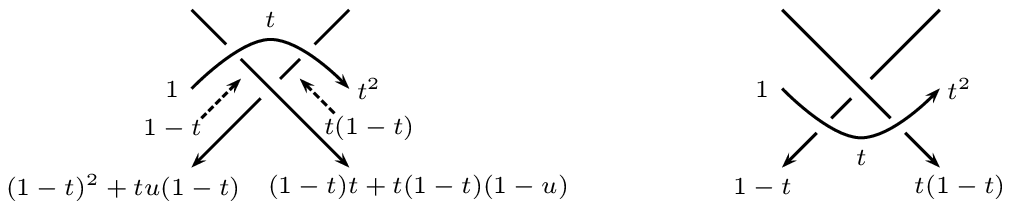}
\centerline{\small Figure 5}
\end{center}
From Figure 5 we conclude that
\begin{center}
$\begin{cases}
(1-t)^2+tu(1-t)=1-t\\
(1-t)t+t(1-t)(1-u)=t(1-t),
\end{cases}$
\end{center}
which follows that
\begin{equation}
t(1-t)(1-u)=0.
\label{3}
\end{equation}
Note that if we reverse the orientation of the horizontal string and switch the two crossings of it, then the new diagram will give us
\begin{equation}
u(1-t)(1-u)=0.
\label{4}
\end{equation}

$\Omega_2':$
\begin{center}
\includegraphics{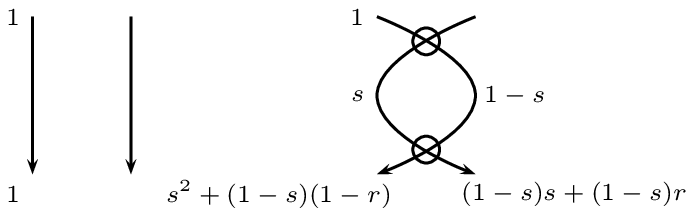}
\centerline{\small Figure 6}
\end{center}
From Figure 6 we conclude that
\begin{center}
$\begin{cases}
s^2+(1-s)(1-r)=1\\
(1-s)s+(1-s)r=0,
\end{cases}$
\end{center}
hence we have
\begin{equation}
(s-1)(s+r)=0.
\label{5}
\end{equation}
If we put the ball on the other lane of Figure 6, we will obtain that
\begin{equation}
(r-1)(s+r)=0.
\label{6}
\end{equation}

From \eqref{3} and \eqref{4} we have $t=1$ or $u=1$ or $t=u=0$, on the other hand \eqref{5}\eqref{6} tell us $s=r=1$ or $s=-r$. We continue our discussion according to the following six cases:
\begin{enumerate}
\item $t=1$ and $s=r=1$. Together \eqref{1} with $t=1$, we conclude that $v=1$. Now let us consider the generalized Reidemeister move $\Omega_3^s$ described below.
\begin{center}
\includegraphics{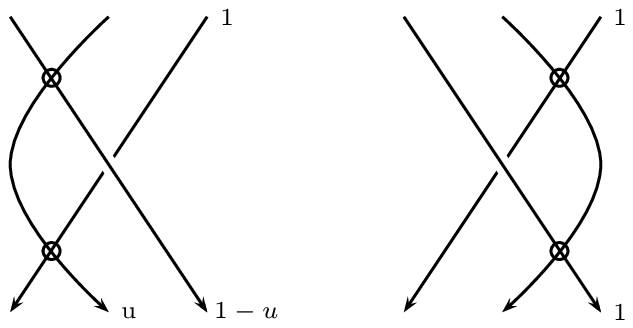}
\centerline{\small Figure 7}
\end{center}
It can be read from Figure 7 that $u$ has to be 0. Similarly, by switching the real crossing in Figure 7 we have $w=0$. According to the analysis above now we find that $u=w=0$ and $v=1$, however this contradicts with \eqref{2}.
\item $u=1$ and $s=r=1$. Since $(w+v-1)u=v$ and $u=1$, it follows that $w=1$. Now Figure 8 tells us that $t=0$, as before if we switch the real crossing in Figure 8 we obtain that $v=0$. Now the equality \eqref{1} does not hold anymore.
\begin{center}
\includegraphics{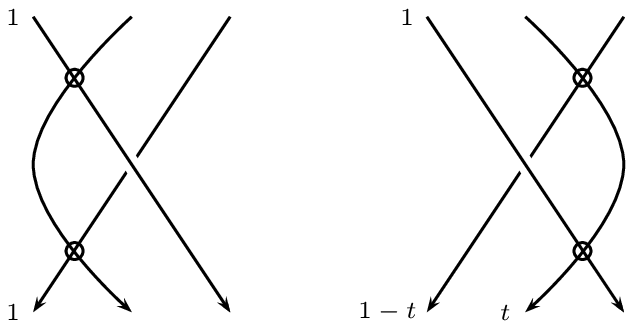}
\centerline{\small Figure 8}
\end{center}
\item $t=u=0$ and $s=r=1$. In this case we have $w=v=0$, because of equalities \eqref{1} and \eqref{2}. Let us consider another $\Omega_3$ move, which is a bit different from that in Figure 5. See the figure below.
\begin{center}
\includegraphics{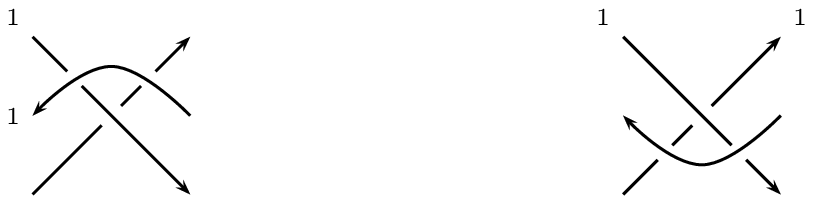}
\centerline{\small Figure 9}
\end{center}
Here the ball is placed on the top left lane at first, but its direction is not preserved under $\Omega_3$.
\item $t=1$ and $s=-r$. This case is essentially equivalent to the fifth case (after switching real crossings), hence we omit it here.
\item $u=1$ and $s=-r$. Analogous to the second case, the equalities \eqref{1}\eqref{2} and $u=1$ implies that $w=1$ and $vt=1$. Now let us go back to Figure 8 and put a ball in the middle lane, the outcome is illustrated in Figure 10.
\begin{center}
\includegraphics{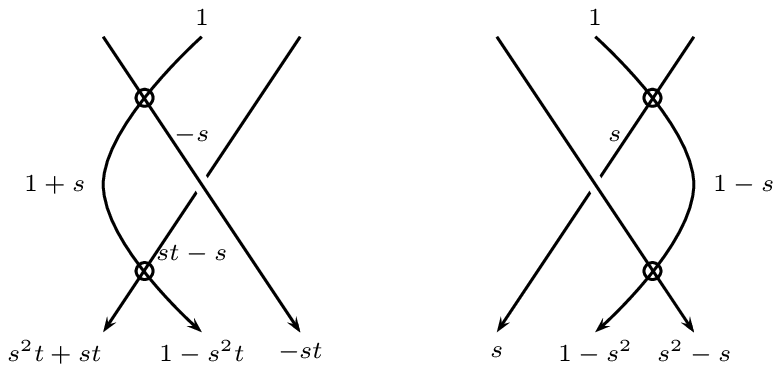}
\centerline{\small Figure 10}
\end{center}
It can be read directly from Figure 10 that
\begin{equation}
\begin{cases}
s(st+t-1)=0\\
s^2(t-1)=0\\
s(s+t-1)=0
\end{cases}
\label{7}
\end{equation}
We will come back to \eqref{7} after finishing the last case.
\item $t=u=0$ and $s=-r$.  Due to the same reason of the third case, there is nothing interesting in this case.
\end{enumerate}

According to the discussion above the only interesting case is the fifth one, note that the fourth case is essentially equivalent to this one. Now we have $u=w=1$, $vt=1$, $s=-r$ and the equalities \eqref{7}. Under these assumptions we turn to the generalized Reidemeister move $\Omega_3'$, see the figure below.
\begin{center}
\includegraphics{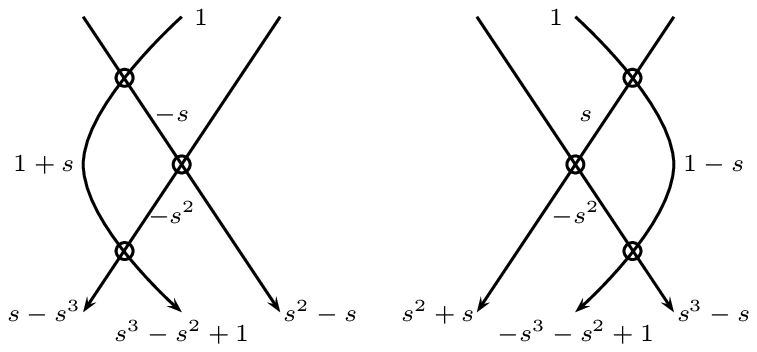}
\centerline{\small Figure 11}
\end{center}
From Figure 11 we conclude that
\begin{center}
$\begin{cases}
s-s^3=s^2+s\\
s^3-s^2+1=-s^3-s^2+1\\
s^2-s=s^3-s,
\end{cases}$
\end{center}
which implies that $s^2=0$. Now equalities \eqref{7} turn out to be $s(t-1)=0$. Finally we have two choices of the bowling ball models:
\begin{enumerate}
  \item $u=w=1$, $s=r=0$ and $v=t^{-1}$,
  \item $u=w=t=v=1$, $r=-s$ and $s^2=0$.
\end{enumerate}

The reader who is familiar with the Burau representation must have found that the first case is nothing but the Burau representation discussed in \cite{Lin1998}. In particular when the virtual string link diagram is a classical braid then this is exactly the classical Burau representation. When the virtual string link diagram contains no virtual crossings, it was proved that each entry of $M(L)$ converges to a rational function of $t$ and the matrix $M(L)$ is invariant under Reidemeister moves \cite{Lin1998}. If the diagram contains some virtual crossings, since $s=r=0$, when we come to a virtual crossing the ball will keep walking. It is not difficult to prove that in this case $M(L)$ is also well-defined and preserved under generalized Reidemeister moves. We remark that the generalization of this representation introduced in \cite{Sil2001} is also valid for virtual string links.

In the remainder of this paper we will focus on the second case, i.e. $u=w=t=v=1$, $r=-s$ and $s^2=0$. Therefore each entry of $M(L)$ takes the value in $\mathds{Z}[s]/(s^2)$. Now we have the following theorem.
\begin{theorem}
Let $L$ be a virtual $n$-string link diagram, then we can assign an $n\times n$ matrix $M(L)$ to $L$ such that
\begin{enumerate}
  \item Each entry of $M(L)$ has the form $as+b$, here $a\in\mathds{Z}$ and $b\in\{0,1\}$;
  \item $M(L)$ is invariant under generalized Reidemeister moves. Moreover $M(L)$ determines a representation of the monoid of virtual $n$-string links.
\end{enumerate}
\end{theorem}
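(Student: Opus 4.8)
The plan is to construct $M(L)$ directly from the ball dynamics and then establish both assertions by a truncation-and-localization argument. Fix a diagram $L$ and define $M(L)_{ij}$ to be the sum, over all ball trajectories that start at $(i,1)$ and leave $L$ at $(j,0)$, of the product of the weights picked up at the traversed crossings, the weights being those of rules 1--6 specialized to $u=w=t=v=1$, $r=-s$, $s^2=0$. The key observation is that at every \emph{real} crossing the keep-walking weight is $1$ and the jump weight is $0$, so a trajectory changes lanes only at virtual crossings, where it jumps with weight $s$ (approached from the left) or $-s$ (from the right) and keeps walking with weight $1\mp s$. Since $s^2=0$, a trajectory that jumps at two or more virtual crossings contributes $0$, so $M(L)_{ij}$ is a \emph{finite} sum. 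There is at most one trajectory making no jump --- it follows the $i$-th strand and exists exactly when $j=\pi(i)$, contributing $\prod(1\pm s)=1+(\text{integer})\,s$ --- and, for each virtual crossing shared by the $i$-th strand and the $\pi^{-1}(j)$-th strand, a single trajectory making exactly one jump there (the rest being forced), each contributing $(\pm s)\prod(1\pm s)\equiv \pm s$. Collecting terms gives $M(L)_{ij}=b+as$ with $a\in\mathds{Z}$ and $b\in\{0,1\}$ (indeed $b=1$ iff $j=\pi(i)$), which is assertion (1) and shows $M(L)$ is a well-defined matrix over $\mathds{Z}[s]/(s^2)$.

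Next I would record that a ball always travels along its current strand in the direction of the orientation, and that this is preserved across every crossing (immediate for real crossings, read off from Figure 3 for virtual ones); hence every non-cyclic trajectory starting at a top point terminates at a bottom point, while cyclic ones involve at least two virtual jumps and so contribute $0$. If $L=L_1\cdot L_2$ is drawn so that each strand meets the gluing line transversally in a single downward point, every trajectory crosses that line exactly once and never returns, so cutting there yields $M(L)_{ij}=\sum_k M(L_1)_{ik}M(L_2)_{kj}$, i.e. $M(L_1\cdot L_2)=M(L_1)M(L_2)$. (That $M(L)$ depends only on the combinatorial diagram, not its planar positioning, is clear from the weights, so this identity is meaningful once invariance is established.) Thus $M$ will descend to a homomorphism of the monoid of virtual $n$-string links as soon as invariance is proved.

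For invariance I would localize each generalized Reidemeister move inside a disk sitting in a horizontal strip, the involved strands running straight in and out; by multiplicativity it then suffices to check that the two sides of the move give the same local matrix (an identity block on the uninvolved strands, together with the genuine local matrix). The classical moves $\Omega_1,\Omega_2,\Omega_3$ are immediate: the ball never jumps at a real crossing, so either side has local matrix equal to the permutation matrix of the underlying braid word, and a classical move preserves that permutation. For the purely virtual moves $\Omega_1',\Omega_2',\Omega_3'$ and the mixed move $\Omega_3^s$ the ball may jump, but at most one jump per trajectory survives $s^2=0$; enumerating the finitely many surviving trajectories on each side and comparing is exactly the computation behind equations \eqref{1}--\eqref{7} together with $r=-s$, $s^2=0$, which hold identically under our parameters. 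For instance, for $\Omega_2'$ the (at most four) trajectories from a given lane give local entries $M_{11}\equiv(1-s)(1+s)+s^2\equiv 1$ and $M_{12}\equiv(1-s)(-s)+s(1-s)\equiv 0$, i.e. the identity, which is also the local matrix of two parallel strands; $\Omega_3^s$ and $\Omega_3'$ are the computations behind Figures 7--11. To cover the various orientation variants I would pass to the detour-move description of virtual equivalence and note that, modulo $s^2$, sliding a virtual arc cannot change $M$: a ball riding that arc keeps walking through all but at most one of its virtual crossings, and any two crossings of that arc with a fixed strand are approached from opposite sides, so their $\pm s$ contributions cancel.

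The only genuine work is this last step, and the main obstacle is the sign/side bookkeeping --- deciding, for each virtual crossing in each oriented move, whether the ball meets it ``from the left'' or ``from the right'' so that the $\pm s$ terms cancel correctly --- together with making sure the list of oriented moves, in particular all orientations of the mixed move, is genuinely exhausted by the relations already derived (or reduced to the detour move). Everything else is forced by $s^2=0$.
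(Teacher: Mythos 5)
Your proposal is correct and follows essentially the same route as the paper: define $M(L)$ as a sum over ball trajectories, use $s^2=0$ to kill every trajectory with two or more virtual jumps (your direct enumeration of the surviving $0$- and $1$-jump trajectories is a cleaner packaging of the paper's argument that any simple loop has multiplicity at most two in any path, and it also recovers the explicit form $m_{ij}=a_{ij}s+[j=\pi(i)]$ stated in the paper's subsequent remark), and then check only the moves containing virtual crossings by local computation, the classical moves being invisible to the ball. Your explicit stacking argument for $M(L_1\cdot L_2)=M(L_1)M(L_2)$ and the detour-move reduction of the remaining oriented cases supply details the paper leaves implicit, but they do not change the overall approach.
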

\begin{proof}Before proving the theorem, we recall some terminology in \cite{Lin1998}. Assume a ball begins at $(i,1)$ and ends up at $(j,0)$, we call the way of walking a \emph{path}. A \emph{loop} is a part of a path that begins and ends at the same crossing. If a path $($loop$)$ contains no loops $($except itself$)$ then we say it is \emph{simple}. The \emph{multiplicity} of a path is the number of simple loops that it contains. Obviously for a given path its multiplicity is finite.
\begin{enumerate}
  \item Let us consider the $(i,j)$-th entry of $M(L)$, the key point is that there are only finite paths connecting $(i,1)$ and $(j,0)$. We remark that in the first case, i.e. $u=w=1$, $s=r=0$ and $v=t^{-1}$, maybe there are countably many paths walking from $(i,1)$ to $(j,0)$. Starting at the point $(i,1)$, the following figure indicates all the possibilities when we meet a virtual crossing point, note that nothing happens when we cross a real crossing.
\begin{center}
\includegraphics{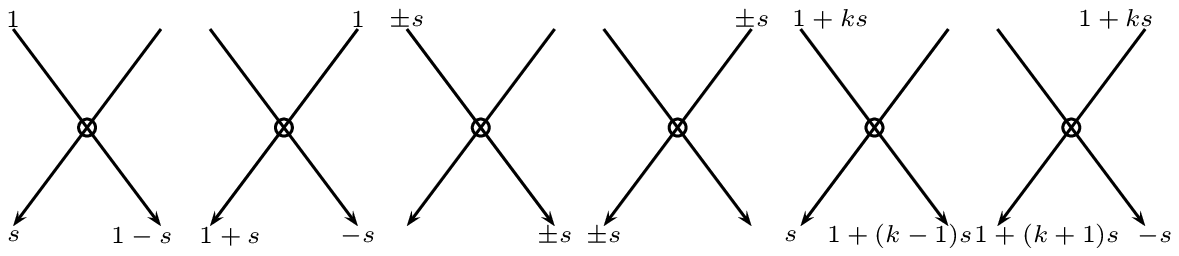}
\centerline{\small Figure 12}
\end{center}
Choose a simple loop $l$, we claim that for any path $p$, the multiplicity of $l$ in $p$ is at most two. First we notice that when we meet a virtual crossing for the first time, the ball will split into two balls with weights $1\pm s$ and $\mp s$ respectively. From Figure 12 we observe that any ball with weight $\pm s$ will walk straight until it arrives $(j,0)$.

For the ball with weight $1\pm s$, when it meets the next virtual crossing, it will continue splitting into two balls, one with weight $\mp s$, the other with weight $(1\pm s)\pm s$. As we have discussed above, the ball with weight $\mp s$ will go to $(j,0)$ straightly, hence we do not need to consider it.

Let us consider a ball $B$ with weight $1+ks$, and we denote the virtual crossing in front of $B$ by $c$. Without loss of generality we assume that $B$ locates on the left top corner of $c$. When we meet $c$, the ball $B$ will split into two balls $B_1$ and $B_2$, with weights $s$ and $1+(k-1)s$ respectively. Since $B_1$ will walk to $(j,0)$ directly, it suffices to consider the other ball, $B_2$. When $B_2$ meets the next virtual crossing it splits into two balls, with weights $\pm s$ and $1+(k-1)s\mp s$ respectively. The ball with weight $\pm s$ may come back to $c$ again, but as we have discussed before it will never jump to other lanes. For this reason it is sufficient to consider the case that $B_2$ comes back to $c$ from the right top corner. Since it may have met some virtual crossings during the loop $l$, the weight of it may have been changed, say $1+k's$. But we still use $B_2$ to denote it. This time $B_2$ will split into two balls, say $B_{2_1}$ and $B_{2_2}$. Here $B_{2_1}$ has weight $1+(k'+1)s$ and $B_{2_2}$ has weight $-s$. Obviously $B_{2_2}$ will continue its journey along $l$ and come back to $c$ again. After that it walks to $(j,0)$ straightly. As a consequence, the claim above is proved. The first part of the theorem follows accordingly.
\begin{center}
\includegraphics{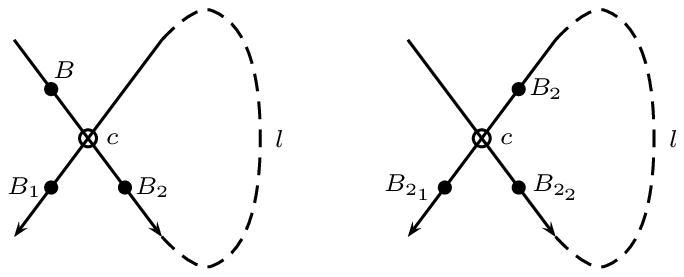}
\centerline{\small Figure 13}
\end{center}
  \item It suffices to check those moves which contain virtual crossings. Some of them have been checked in Figure 6, Figure 10 and Figure 11. We illustrate the invariance under $\Omega_1'$ and another $\Omega_2'$ in Figure 14. The other cases of $\Omega_3'$ and $\Omega_3^s$ can be verified by an analogous argument.
\begin{center}
\includegraphics{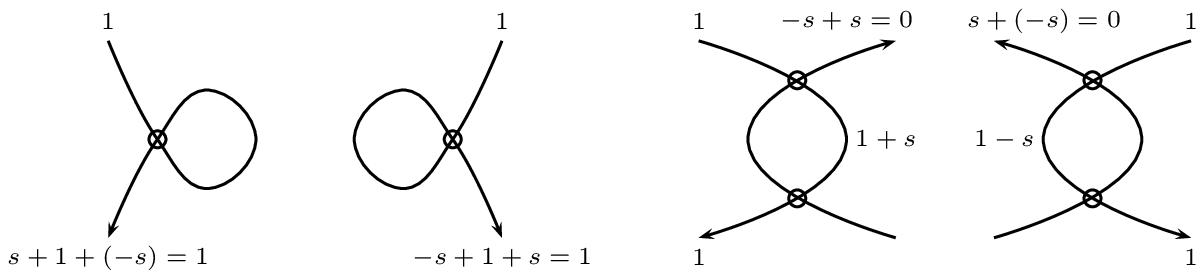}
\centerline{\small Figure 14}
\end{center}
\end{enumerate}
\end{proof}

\textbf{Remark} According to the proof of Theorem 2.1, it turns out that for any $1\leq i\leq n$ the entries of $M(L)=(m_{ij})_{n\times n}$ have the form
\begin{center}
$\begin{cases}
m_{ij}=a_{ij}s+1, &\text{if } j=\varphi(i);\\
m_{ij}=a_{ij}s, &\text{otherwise},
\end{cases}$
\end{center}
here $\varphi:\{1, \cdots, n\}\rightarrow\{1, \cdots, n\}$ denotes the permutation induced from the virtual string link. In particular, if $L$ contains no virtual crossings, then $a_{ij}=0$ $(1\leq i, j\leq n)$. On the other hand, for any $1\leq i\leq n$ we always have
\begin{center}
$\sum\limits_{j=1}^nm_{ij}=1,$
\end{center}
which implies that $M(L)$ always has 1 as an eigenvalue with an eigenvector
$\begin{pmatrix}
1 \\
\vdots \\
1 \\
\end{pmatrix}.$

\section{Some examples}
In this section we give some examples of virtual string link and calculate the associated matrix invariants. Note that if $L$ is the trivial $n$-string link, i.e. there exists no crossing points, then $M(L)=I_n$. On the other hand if some entry $m_{ij}=a_{ij}s+b_{ij}$ of $M(L)$ has nonzero $a_{ij}$ then it follows that $L$ is nonclassical.

First let us consider the virtual string link $L$ illustrated in Figure 2. Direct calculation shows that
\begin{center}
$M(L)=\begin{pmatrix}
1+s & -s \\
s & 1-s \\
\end{pmatrix},$
\end{center}
hence this virtual string link is nonclassical and hence nontrivial.

Next let us consider the following virtual 2-string links:
\begin{center}
\includegraphics{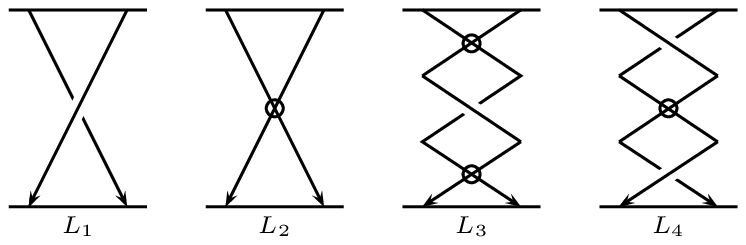}
\centerline{\small Figure 15}
\end{center}
Routine calculation shows that
\begin{center}
$M(L_1)=\begin{pmatrix}
0 & 1 \\
1 & 0 \\
\end{pmatrix},
M(L_2)=\begin{pmatrix}
s & 1-s \\
1+s & -s \\
\end{pmatrix},
M(L_3)=\begin{pmatrix}
2s & 1-2s \\
1+2s & -2s \\
\end{pmatrix},
M(L_4)=\begin{pmatrix}
-s & 1+s \\
1-s & s \\
\end{pmatrix},$
\end{center}
which implies that they are mutually different.

Suppose $L=K_1\cup K_2$ is a virtual 2-string link, here $K_1$ and $K_2$ denote the two strings. We use $RC_{K_1\cap K_2}$ and $VC_{K_1\cap K_2}$ to refer to the set of real crossings and the set of virtual crossings between $K_1$ and $K_2$ respectively. One can easily check that
\begin{center}
$lk(L)=\sum\limits_{c\in RC_{K_1\cap K_2}}w(c)$ and $lk_v(L)=\sum\limits_{c\in VC_{K_1\cap K_2}}1$ $($mod 2$)$
\end{center}
are both invariant under generalized Reidemeister moves. Here $w(c)$ refers to the writhe of the real crossing $c$. For example, one can use $lk(L)$ and $lk_v(L)$ to show that $L_1, L_2 (L_4), L_3$ in Figure 15 are mutually different. However for $L_2$ and $L_4$, we have $lk(L_2)=lk(L_4)$ and $lk_v(L_2)=lk_v(L_4)$. Hence $L_2$ and $L_4$ can not be distinguished by these two invariants, but according to the calculation above they can be distinguished by the matrix invariant.

Before ending this section we give a simple application of the matrix invariant. For virtual 2-string link $L$ we have defined an invariant $lk_v(L)\in \mathds{Z}_2$. In general for a virtual $n$-string link $L=K_1\cup\cdots\cup K_n$ we can also consider the the number of virtual crossings between $K_i$ and other strings. However it is evident that this is not an invariant. Instead, we can consider the minimal number of this for all diagrams that represent $L$. We use $lk_v(L; K_i)$ to denote it. From some point $lk_v(L; K_i)$ measures the ``virtual linking complexity" between $K_i$ and other strings. Assume the assigned matrix $M(L)=(m_{ij})_{n\times n}=(a_{ij}s+b_{ij})_{n\times n}$, we define $a_i(L)=$max$|a_{ij}|$ for all $1\leq j\leq n$.
\begin{proposition}
$lk_v(L; K_i)\geq a_i(L)$.
\end{proposition}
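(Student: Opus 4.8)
The plan is to establish the pointwise bound $\max_{1\le j\le n}|a_{ij}|\le V_i(D)$ for \emph{every} diagram $D$ of $L$, where $V_i(D)$ denotes the number of virtual crossings of $D$ between $K_i$ and the other strings; since $M(L)$ is a diagram-independent invariant by Theorem 2.1, minimising $V_i(D)$ over all diagrams $D$ of $L$ then gives $a_i(L)\le lk_v(L;K_i)$.

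I would begin by recording the dynamics of the ball in the model at hand ($u=w=t=v=1$, $r=-s$, $s^2=0$), most of which is already contained in the proof of Theorem 2.1 and in Figure 12. Since all four real parameters equal $1$, the ball keeps walking through every real crossing and never changes strands there. At a virtual crossing a ball of weight $\alpha$ produces a ``main'' descendant of weight $\alpha(1\mp s)$ that stays on the current strand and a ``phantom'' descendant of weight $\alpha(\pm s)$ that moves onto the other strand; if $\alpha=1+ks$ the main descendant again has the form $1+k's$ and the phantom has weight $\pm s$, whereas if $\alpha=\pm s$ the phantom descendant has weight $0$ (here $s^2=0$ is used), so a ball of weight $\pm s$ can never split and simply rides its strand forward to the exit. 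Hence a ball launched at $(i,1)$ produces a single main ball that stays on $K_i$ throughout (possibly revisiting self-crossings of $K_i$) and exits at the bottom endpoint $(\varphi(i),0)$ of $K_i$, together with finitely many phantom balls, one born at each passage of the main ball through a virtual crossing.

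Next I would read off row $i$ of the matrix. A virtual crossing between $K_i$ and a different strand $K_k$ is passed once by the main ball, which there launches one phantom of weight $\pm s$ onto $K_k$; that phantom rides $K_k$ to its exit $(\varphi(k),0)$. A virtual self-crossing of $K_i$ is passed twice, launching two phantoms, but both return onto $K_i$ and hence exit at $(\varphi(i),0)$. Therefore, for $j\neq\varphi(i)$ the entry $m_{ij}$ equals the sum of the $\pm s$ contributed by the virtual crossings between $K_i$ and $K_{\varphi^{-1}(j)}$ (self-crossings of $K_i$ contribute nothing off the diagonal), so $|a_{ij}|$ is at most the number of those crossings, in particular $|a_{ij}|\le V_i(D)$. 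For the diagonal entry I would not track self-crossings at all, but instead use the identity $\sum_{j=1}^n m_{ij}=1$ from the Remark following Theorem 2.1: it gives $\sum_{j=1}^n a_{ij}=0$, so $a_{i,\varphi(i)}=-\sum_{j\neq\varphi(i)}a_{ij}$ is a sum of exactly $V_i(D)$ terms, each equal to $\pm1$, whence $|a_{i,\varphi(i)}|\le V_i(D)$ as well. This proves $\max_j|a_{ij}|\le V_i(D)$, and the Proposition follows by minimising over all diagrams of $L$.

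The step I expect to be the main obstacle is the bookkeeping in the middle two paragraphs: one must verify carefully that the main ball never leaves $K_i$, that each virtual crossing between $K_i$ and another strand contributes exactly one phantom which then reaches the lane $\varphi(k)$, and that no phantom ever re-splits. All of this rests on the relation $s^2=0$ and was essentially carried out in the proof of Theorem 2.1, so the task is to extract and repackage that argument rather than to redo it. The only genuinely new input is the control of the diagonal entry $m_{i,\varphi(i)}$; this is handled cleanly by the row-sum identity, whereas a more computational substitute would be to check that the two phantoms born at each self-crossing of $K_i$ carry opposite signs and cancel, which the row-sum argument lets us avoid.
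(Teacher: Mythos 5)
Your proposal is correct and follows essentially the same route as the paper, whose (very terse) proof likewise reads the bound off the ball dynamics of Figure 12: each virtual non-self-crossing contributes $\pm1$ to the coefficients of row $i$, while the contributions of virtual self-crossings cancel. The only difference is cosmetic --- you control the diagonal entry via the row-sum identity $\sum_{j}m_{ij}=1$ rather than verifying directly that the self-crossing contributions cancel, which is a clean substitute for the paper's cancellation remark.
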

\begin{proof}
The result mainly follows from Figure 12. Notice that the contribution of a virtual self-crossing to $a_i(L)$ cancels out, and a virtual non-self-crossing has contribution $\pm1$ to $a_i(L)$.
\end{proof}

\section{Virtual flat biquandle}
In section 2 we give a bowling ball representation of virtual $n$-string link. In particular this interpretation also offers a representation of the virtual braid group $VB_n$. Recall that $VB_n$, the group of virtual braids on $n$ strings is generated by $\sigma_1, \cdots, \sigma_{n-1}$ and $\tau_1, \cdots, \tau_{n-1}$. Here $\sigma_i$ and $\tau_i$ correspond to the positive crossing and virtual crossing between the $i$-th string and $(i+1)$-th string respectively. The relations are listed below:
\begin{enumerate}
  \item $\sigma_i\sigma_j=\sigma_j\sigma_i$, if $|i-j|>1$;
  \item $\sigma_i\sigma_{i+1}\sigma_i=\sigma_{i+1}\sigma_i\sigma_{i+1}$;
  \item $\tau_i^2=1$;
  \item $\tau_i\tau_j=\tau_j\tau_i$, if $|i-j|>1$;
  \item $\tau_i\tau_{i+1}\tau_i=\tau_{i+1}\tau_i\tau_{i+1}$;
  \item $\sigma_i\tau_j=\tau_j\sigma_i$, if $|i-j|>1$;
  \item $\sigma_i\tau_{i+1}\tau_i=\tau_{i+1}\tau_i\sigma_{i+1}$.
\end{enumerate}
Define a homomorphism $\rho:VB_n\rightarrow$ GL$_n(\mathds{Z}[s]/(s^2))$ as follows:
\begin{center}
$\sigma_i\rightarrow I_{i-1}\oplus\begin{pmatrix}
0 & 1 \\
1 & 0 \\
\end{pmatrix}\oplus I_{n-i-1}$ and $\tau_i\rightarrow I_{i-1}\oplus\begin{pmatrix}
s & 1+s \\
1-s & -s \\
\end{pmatrix}\oplus I_{n-i-1}$.
\end{center}
We remark that $\rho$ is evidently not faithful, for example $\rho(\sigma_1^2)=
\begin{pmatrix}
1 & 0 \\
0 & 1 \\
\end{pmatrix}$.
\begin{theorem}
If $\beta\in VB_n$, then $M(\beta)^T=\rho(\beta)$.
\end{theorem}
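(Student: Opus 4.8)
The plan is to reduce the claimed identity to the generators of $VB_n$ and then verify it by a direct computation of the bowling ball matrices of $\sigma_i$ and $\tau_i$ in the second model, where $u=w=t=v=1$, $r=-s$ and $s^2=0$.

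First I would note that by Theorem 2.1 the assignment $M$ is a representation of the monoid of virtual $n$-string links, hence on the subgroup $VB_n$ it is multiplicative; since taking transposes reverses the order of a matrix product, which compensates for the order in which the crossings of a braid word are traversed, the assignment $\beta\mapsto M(\beta)^T$ is a group homomorphism $VB_n\to\mathrm{GL}_n(\mathds{Z}[s]/(s^2))$ with the same composition convention as $\rho$. Because $VB_n$ is generated by $\sigma_1,\dots,\sigma_{n-1},\tau_1,\dots,\tau_{n-1}$, it then suffices to prove $M(\sigma_i)^T=\rho(\sigma_i)$ and $M(\tau_i)^T=\rho(\tau_i)$ for every $i$.

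Second, I would compute the two matrices from the rules of Figure 3. For $\sigma_i$, rules 3 and 4 with $v=w=1$ say that a ball meeting a real positive crossing never jumps and simply follows its strand, so $M(\sigma_i)$ is the permutation matrix $I_{i-1}\oplus\left(\begin{smallmatrix}0&1\\1&0\end{smallmatrix}\right)\oplus I_{n-i-1}$ interchanging strands $i$ and $i+1$; this block is symmetric, so $M(\sigma_i)^T=\rho(\sigma_i)$. For $\tau_i$ only rules 5 and 6 are relevant, with jump weight $s$ from the left and $r=-s$ from the right. A ball entering $\tau_i$ from the left splits with weight $s$ for the jump (exiting in lane $i$) and weight $1-s$ for walking straight (exiting in lane $i+1$); a ball entering from the right splits with weight $-s$ for the jump (exiting in lane $i+1$) and weight $1+s$ for walking straight (exiting in lane $i$). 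Hence the block of $M(\tau_i)$ on rows and columns $i,i+1$ is $\left(\begin{smallmatrix}s&1-s\\1+s&-s\end{smallmatrix}\right)$, whose transpose $\left(\begin{smallmatrix}s&1+s\\1-s&-s\end{smallmatrix}\right)$ is exactly the block appearing in $\rho(\tau_i)$; note that $s^2=0$ is what places these entries in $\mathds{Z}[s]/(s^2)$, the target of $\rho$, consistently with Theorem 2.1(1). Together with the first step and multiplicativity, this proves the theorem.

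The work here is bookkeeping rather than a genuine obstacle: the one point that requires care is fixing, consistently with Figure 3 and with the convention used to read braid words, which strand counts as ``the left side'' and which as ``the right side'' at a virtual crossing, which lane a ball lands in after a jump, and correspondingly whether it is $M$ or $M^{T}$ that carries the homomorphism structure on $VB_n$. Once these conventions are pinned down the verification on $\sigma_i$ and $\tau_i$ is immediate, and the sanity checks $M(\sigma_i)^2=M(\tau_i)^2=I$ (using $s^2=0$) match the relations $\sigma_i^2=1$ failing in $VB_n$ while $\rho$ is not faithful.
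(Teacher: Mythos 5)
Your proof is correct and takes essentially the same route as the paper's: the paper's one-line argument observes that a braid has no loops, so $M(\beta)$ is obtained crossing-by-crossing, and reads the generator matrices for $\sigma_i$ and $\tau_i$ off Figure 16 --- which is exactly your reduction to generators plus the multiplicativity/transpose bookkeeping, only made explicit. Your computed blocks agree with the paper's own examples ($M(L_1)$ and $M(L_2)$ in Section 3), so the verification checks out.
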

\begin{proof}
Notice that if $\beta$ is a virtual braid, then each path contains no loops. The result follows directly from the action of real crossing and virtual crossing, see the figure below.
\begin{center}
\includegraphics{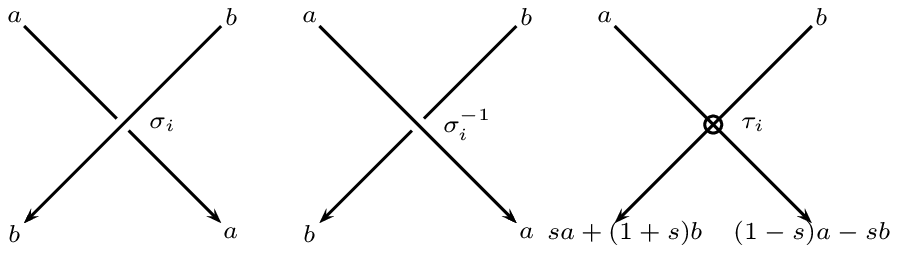}
\centerline{\small Figure 16}
\end{center}
\end{proof}

Motivated by Figure 16, in this section we want to discuss the root structure of the representation $\rho$. Recall that a \emph{quandle} $(Q, \ast)$, is a set $Q$ with a binary operation $(a, b)\rightarrow a\ast b$ satisfying the following axioms:
\begin{enumerate}
  \item $\forall a\in Q$, $a\ast a=a$.
  \item $\forall b, c\in Q$, $\exists!a\in Q$ $($there exists an unique $a\in Q)$ such that $a\ast b=c$.
  \item $\forall a, b, c\in Q$, $(a\ast b)\ast c=(a\ast c)\ast(b\ast c)$.
\end{enumerate}
The notion of quandle was first introduced by Joyce \cite{Joy1982} and Matveev\cite{Mat1984} independently. For each classical knot there is an associated knot quandle, which is known to be a powerful invariant. With a fixed finite quandle $Q$, one can count the number of homomorphisms from the knot quandle to $Q$, which is known as the quandle counting invariant. From the viewpoint of knot diagram, this is equivalent to count the colorings which assign an element of $Q$ to each arc of the diagram, such that some condition $($see Figure 17$)$ is satisfied at each crossing point. Here the term arc means a part of the diagram from an undercrossing to the next undercrossing. Instead of coloring arcs, one also can color the semiarcs of a knot diagram, here a semiarc denotes a part of the diagram from a crossing to the next crossing. For this purpose, the notion of \emph{biquandle} was proposed in \cite{Fen2004}. Later, for virtual knots L. Kauffman and V. O. Manturov \cite{Kau2005} introduced the notion of \emph{virtual biquandle}, which assigns some relations on virtual crossing points $($see Figure 17$)$. We refer the readers to the references mentioned above for a detailed definition of these quandle structures.
\begin{center}
\includegraphics{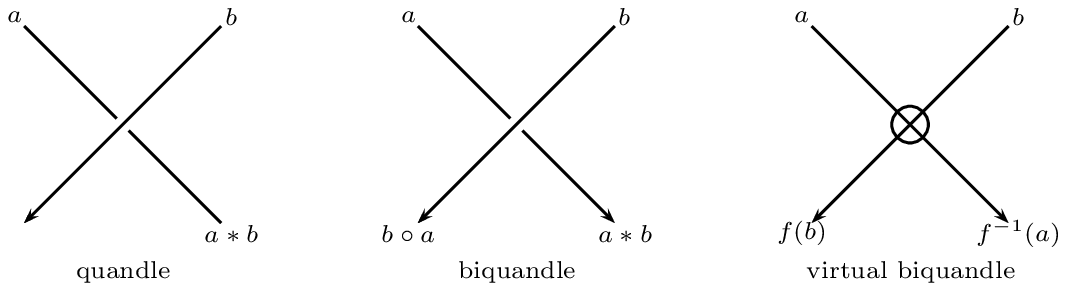}
\centerline{\small Figure 17}
\end{center}

Before introducing the flat biquandle \cite{Kau2012} $($which was named as semiquandle in \cite{Hen2010}$)$ we need to take a brief review of flat virtual knot theory. A flat virtual knot diagram is obtained from a virtual knot diagram by replacing all real crossings by flat crossings. Similarly replacing all the real crossings in generalized Reidemeister moves with flat crossings one obtains the flat Reidemeister moves. Then a \emph{flat virtual knot} is an equivalence class of flat virtual knot diagrams under flat Reidemeister moves. In other words, flat virtual knots can be described as the equivalence classes of virtual knots without under/overcrossing information. Given a flat virtual knot diagram $K$, if $K$ has $k$ flat crossings, then by replacing each flat crossing with an undercrossing or an overcrossing one obtains $2^k$ virtual knot diagrams. According to \cite{Hen2010} we say $K$ is the \emph{shadow} of these virtual knot diagrams and each one of these virtual knot diagrams is a \emph{lift} of $K$. By definition, it is not difficult to find that if $K$ represents a nonclassical flat virtual knot then each lift of it represents a nonclassical virtual knot. Therefore it is significant to detect whether a flat virtual knot is classical or not. The flat biquandle structure plays an important role in flat virtual knot theory. By a \emph{flat biquandle}, we mean a set $FB$ with two binary operations denote $a\ast b$ and $a\circ b$ satisfying the following axioms, which are derived from variations on Reidemeister moves:
\begin{enumerate}
  \item $\forall a\in FB, \exists!x, y\in FB$ such that $a\circ x=x, x\ast a=a, y\circ a=a, a\ast y=y$;
  \item $\forall a, b\in FB, \exists!x, y\in FB$ such that $x=b\circ y, y=a\circ x, b=x\ast a, a=y\ast b$, and $(a\circ b)\ast(b\ast a)=a, (b\ast a)\circ(a\circ b)=b$;
  \item $\forall a, b, c\in FB$, we have $(a\circ b)\circ c=(a\circ(c\ast b))\circ(b\circ c), (c\ast b)\ast a=(c\ast(a\circ b))\ast(b\ast a), (b\circ c)\ast(a\circ(c\ast b))=(b\ast a)\circ(c\ast(a\circ b))$.
\end{enumerate}
Similar to the biquandle coloring described in Figure 17, with a flat biquandle $FB$ one can use it to color a flat virtual knot diagram. Moreover one can also add an operation at virtual crossings like virtual biquandle $($see Figure 17$)$, then one need to add a unary operation to flat biquandle to define the \emph{virtual semiquandle}, see \cite{Hen2010} for more details.

Similar to the flat virtual knots, one can define the flat virtual string links and flat virtual braids. According to our definition of the matrix invariant, it is obvious that the matrix invariant is a flat virtual string link invariant, and the representation $\rho$ can be regarded as a representation of flat virtual braids $FVB_n$. Notice that $FVB_2=\{\sigma_1, \tau_1|\sigma_1^2=\tau_1^2=1\}=\mathds{Z}_2\ast \mathds{Z}_2$. Since
\begin{center}
$\rho((\sigma_1\tau_1)^m)=
\begin{pmatrix}
1-ms & -ms \\
ms & 1+ms \\
\end{pmatrix}$ and
$\rho((\tau_1\sigma_1)^m)=
\begin{pmatrix}
1+ms & ms \\
-ms & 1-ms \\
\end{pmatrix}$,
\end{center}
it follows that $\rho$ is faithful for $FVB_2$.
We remark that this is not true for $FVB_n$ when $n\geq3$. For example,
\begin{center}
$\rho(\sigma_2\tau_1\sigma_1\tau_2\sigma_2\sigma_1\tau_1\sigma_2\tau_2\sigma_2)=
\begin{pmatrix}
1 & 0 & 0 \\
0 & 1 & 0 \\
0 & 0 & 1 \\
\end{pmatrix}$,
\end{center}
however $\sigma_2\tau_1\sigma_1\tau_2\sigma_2\sigma_1\tau_1\sigma_2\tau_2\sigma_2$ is nontrivial. In order to see this, consider the permutation representation $r: FVB_3\rightarrow S_3$ defined by
\begin{center}
$r(\sigma_1)=r(\sigma_2)=(1, 2, 3), r(\tau_1)=(2, 1, 3)$ and $r(\tau_2)=(1, 3, 2)$.
\end{center}
Then $r(\sigma_2\tau_1\sigma_1\tau_2\sigma_2\sigma_1\tau_1\sigma_2\tau_2\sigma_2)=(3, 1, 2)\neq(1, 2, 3)$.

The main idea of virtual flat biquandle is derived from Figure 16. Unlike the biquandle which adds two binary operations at flat crossings but no operation at virtual crossings, we would like to add two binary operations at virtual crossings but no operation at flat crossings. The difference between our construction and virtual biquandle $($or virtual semiquandle$)$ is that virtual biquandle $($or virtual semiquandle$)$ adds one unary operation at virtual crossings, but we add two binary operations.
\begin{definition}
A virtual flat biquandle is a set $VFB$ with two binary operations denoted by $a\ast b$ and $a\circ b$. If we denote $a\ast b$ and $a\circ b$ by $S_b(a)$ and $T_b(a)$ respectively, then $S_a, T_a: VFB\rightarrow VFB$ satisfy the following axioms:
\begin{enumerate}
  \item $S_aS_b=S_bS_a, T_aT_b=T_bT_a, S_aT_b=T_bS_a$;
  \item $S_a=S_{T_b(a)}=S_{S_b(a)}, T_a=T_{S_b(a)}=T_{T_b(a)}$;
  \item $T_aS_a=S_aT_a=id$.
\end{enumerate}
\end{definition}

For example, let $S$ be a set, if for any $x\in S$ we have $x\ast y=x\circ y=x$ for all $y\in S$, then we call $S$ is a \emph{trivial virtual flat biquandle}. Moreover, for any bijection $\psi: S\rightarrow S$ we can define a \emph{constant-action virtual flat biquandle} structure on $S$ by setting $x\ast y=\psi(x)$ and $x\circ y=\psi^{-1}(x)$. The name ``constant-action" is borrowed from \cite{Hen2010}.

As another example, let us consider the virtual flat biquandle generated by one element, i.e. $<a>$. According to the virtual flat biquandle axioms it is not difficult to observe that the element of $<a>$ has the form $<a>=\{a, S_a^n(a), T_a^n(a)\}$ $(n\in \mathds{Z}^+)$.

Let $L$ be a flat virtual link diagram, the \emph{fundamental virtual flat biquandle} $VFB(L)$ is generated by the v-arcs $($here the term \emph{v-arc} means a part of the diagram from a virtual crossing to the next virtual crossing$)$ of the diagram under the equivalence relation generated by the virtual flat biquandle axioms and the relations at virtual crossings, see Figure 18.
\begin{center}
\includegraphics{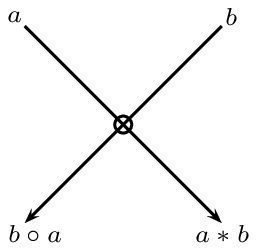}
\centerline{\small Figure 18}
\end{center}
\begin{theorem}
Let $L$ and $L'$ be a pair of flat virtual link diagrams. If $L$ can be transformed into $L'$ by one flat Reidemeister move, then there exists a virtual flat biquandle isomorphism between $VFB(L)$ and $VFB(L')$.
\end{theorem}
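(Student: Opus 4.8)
The plan is to verify invariance of the fundamental virtual flat biquandle under each flat Reidemeister move by constructing an explicit isomorphism between the presentations $VFB(L)$ and $VFB(L')$. Since the generators of $VFB(L)$ are the v-arcs (arcs running between consecutive virtual crossings) and the relations come from the virtual crossings via Figure 18, the proof is a move-by-move bookkeeping argument. For each flat Reidemeister move I would identify the v-arcs appearing near the region where the move is performed, list the relations they satisfy before and after the move, and exhibit a bijection between generators that carries one relation set to the other (modulo the virtual flat biquandle axioms in Definition 4.1). Outside the small disk in which the move takes place, the diagrams agree, so the identity map on those v-arcs extends the local bijection; the only work is the local analysis.

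First I would dispose of the moves that involve only flat (real) crossings: the flat versions of $\Omega_1$, $\Omega_2$, $\Omega_3$. Because the virtual flat biquandle assigns \emph{no} operation at a flat crossing, a v-arc simply passes through flat crossings unchanged. Hence a flat Reidemeister move performed in the interior of a v-arc does not alter the generating set or any relation at all—the two presentations are literally identical and the isomorphism is the identity. The one subtlety is when a flat move changes which virtual crossings bound a given v-arc (for instance when the move slides a strand across a virtual crossing), but in the purely-flat moves no virtual crossing is created, destroyed, or crossed, so this does not occur. This handles the bulk of the cases essentially for free.

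Next I would treat the moves involving virtual crossings: the virtual $\Omega_1'$, the virtual $\Omega_2'$, the mixed $\Omega_3^s$ and $\Omega_3'$. For virtual $\Omega_1'$ (a kink at a virtual crossing), the incoming v-arc $a$ must equal what comes out: the relation forced is $S_a(x)=x$ and $T_a(x)=x$ for the newly created short v-arc $x$, which is exactly what axiom (1)/(2)–type consistency plus $T_aS_a=id$ from axiom (3) allows us to eliminate. For virtual $\Omega_2'$, two consecutive virtual crossings of the same pair of strands introduce v-arcs whose defining relations compose to $S_aT_a$ or $T_aS_a$ on the through-strand, which is the identity by axiom (3); this is precisely the computation already shown in Figure 6 (and its variant in Figure 14) at the matrix level, now lifted to the biquandle. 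For the mixed $\Omega_3^s$ and $\Omega_3'$ moves, the relation sets on the two sides differ by reordering the order in which a strand meets three virtual crossings, and the needed identifications are exactly the commutation relations $S_aS_b=S_bS_a$, $T_aT_b=T_bT_a$, $S_aT_b=T_bS_a$ of axiom (1) together with the ``stability'' relations $S_a=S_{S_b(a)}=S_{T_b(a)}$, $T_a=T_{S_b(a)}=T_{T_b(a)}$ of axiom (2); in each case I would write the four or five v-arc labels on each side, read off the relations from Figure 18, and match them. In every case the bijection between generators is the obvious one suggested by the figure, and well-definedness in both directions follows because the axioms are symmetric enough to run the argument backwards.

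The main obstacle is bookkeeping rather than conceptual: correctly enumerating the v-arcs and their relations in the $\Omega_3$-type moves, where several strands and up to three virtual crossings interact, and checking that the proposed generator bijection respects \emph{all} induced relations (not just the ones local to the move) — one has to be careful that v-arcs which are ``long'' and leave the disk are matched consistently with the identity map outside. I expect the axioms of Definition 4.1 to have been reverse-engineered from exactly these moves (as the remark ``derived from variations on Reidemeister moves'' for the flat biquandle suggests), so once the local relations are written out the verification is forced; nonetheless, care is needed with orientations and with the left/right distinction at virtual crossings (cf. the asymmetry between rules (5) and (6), i.e. between $s$ and $r=-s$, in Section 2), since a virtual crossing approached from different sides yields $S$ versus $T$, and a sign error there would break the argument.
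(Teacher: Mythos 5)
Your proposal is correct and in substance coincides with the paper's proof: both reduce to a local, move-by-move analysis in which the purely flat moves are free (no operation at flat crossings) and the moves involving virtual crossings are handled by matching the crossing relations of Figure 18 against the axioms of Definition 4.1 (commutativity $S_aS_b=S_bS_a$, $T_aT_b=T_bT_a$, $S_aT_b=T_bS_a$ for the triangle moves; stability $S_a=S_{S_b(a)}=S_{T_b(a)}$, $T_a=T_{S_b(a)}=T_{T_b(a)}$; and $S_aT_a=T_aS_a=\mathrm{id}$ for $\Omega_1'$ and $\Omega_2'$). The one organizational difference is that the paper does not check the purely virtual moves $\Omega_1',\Omega_2',\Omega_3'$ one by one: it first verifies that every virtual flat biquandle satisfies the three flat biquandle axioms (each verification being exactly one of your local computations) and then invokes the known result of Henrich--Nelson that flat biquandle colorings are invariant under those moves, so that only the mixed move $\Omega_3^s$ requires a direct diagrammatic check (Figure 19, using $T_a=T_{T_b(a)}$, $S_aT_b=T_bS_a$ and $S_a=S_{S_b(a)}$). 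Your route re-derives what the citation supplies, which costs a little more bookkeeping but is self-contained; the paper's route buys brevity at the price of relying on the dictionary between operations at virtual crossings here and operations at flat crossings in the semiquandle setting. One small imprecision to fix when you write out the kink case: the relations at a virtual $\Omega_1'$ are not of the form $S_a(x)=x$ and $T_a(x)=x$; rather, with incoming arc $a$ and loop arc $x$, one relation determines $x$ (e.g.\ $a\circ x=x$) and the other must force the outgoing arc to equal $a$ (e.g.\ $x\ast a=a$), which is exactly flat biquandle axiom (1) and is what licenses eliminating the generator $x$ by a Tietze transformation.
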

\begin{proof}
First we show that the operations of a virtual flat biquandle $VFB$ satisfy all the axioms of flat biquandle. This implies the invariance of $VFB(L)$ under flat $\Omega_1', \Omega_2', \Omega_3'$ \cite{Hen2010}.
\begin{enumerate}
\item $\forall a\in VFB$, if $x\ast a=a$, then
\begin{center}
$x\xlongequal{S_aT_a=id}(x\ast a)\circ a=a\circ a$.
\end{center}
On the other hand,
\begin{center}
$a\circ(a\circ a)\xlongequal{T_a=T_{T_b(a)}}a\circ a$
\end{center}
implies $a\circ x=x$. For the same reason one can prove that $y=a\ast a$.
\item $\forall a, b\in VFB$,
\begin{center}
$x\xlongequal{S_aT_a=id}(x\ast a)\circ a=b\circ a$,\\
$y\xlongequal{S_aT_a=id}(y\ast b)\circ b=a\circ b$.
\end{center}
In this case, it is easy to check that all other axioms are also satisfied.
\item $\forall a, b, c\in VFB$,
\begin{center}
$(a\circ b)\circ c\xlongequal{T_aT_b=T_bT_a}(a\circ c)\circ b\xlongequal{T_a=T_{S_b(a)}=T_{T_b(a)}}(a\circ(c\ast b))\circ(b\circ c)$.
\end{center}
The other two axioms can be verified in the same way.
 \end{enumerate}

Now it is sufficient for us to check the flat version of $\Omega_3^s$.
\begin{center}
\includegraphics{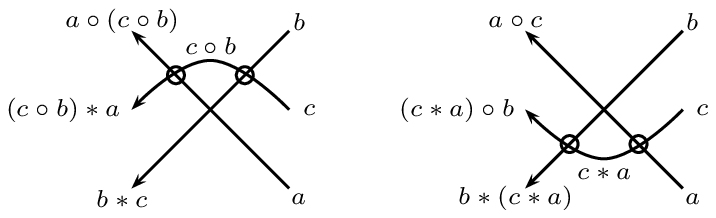}
\centerline{\small Figure 19}
\end{center}
Comparing the two figures in Figure 19, we have
\begin{center}
$a\circ(c\circ b)\xlongequal{T_a=T_{T_b(a)}}a\circ c, (c\circ b)\ast a\xlongequal{S_aT_b=T_bS_a}(c\ast a)\circ b$ and $b\ast c\xlongequal{S_a=S_{S_b(a)}}b\ast(c\ast a)$.
\end{center}
Other cases of flat $\Omega_3^s$ can be checked in the same manner. The proof is finished.
\end{proof}
\begin{corollary}
Let $L$ be a flat virtual link and $S$ a finite virtual flat biquandle. Then the cardinality of the set of virtual flat biquandle homomorphisms from $VFB(L)$ to $S$, denoted by $vc(L, S)$, is an invariant of $L$.
\end{corollary}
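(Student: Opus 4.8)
The plan is to deduce this from Theorem~4.4 by the standard mechanism that converts a ``fundamental object'' invariant into a ``counting'' invariant. Note first that, as defined, $vc(L,S)$ is really attached to a choice of flat virtual link diagram $D$ representing $L$, since the generators-and-relations description of $VFB(L)$ is read off from a diagram; so the real content of the statement is that the number $\#\,\mathrm{Hom}_{VFB}\bigl(VFB(D),S\bigr)$ does not depend on $D$. Because any two flat virtual link diagrams of $L$ are related by a finite sequence of flat Reidemeister moves (and planar isotopy, which does not change $VFB$ at all), it suffices to show the count is unchanged under a single flat Reidemeister move and then induct on the length of the sequence.

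So let $D$ and $D'$ be flat virtual link diagrams differing by one flat Reidemeister move. By Theorem~4.4 there is a virtual flat biquandle isomorphism $\phi\colon VFB(D)\to VFB(D')$. I would then check that $f\mapsto f\circ\phi^{-1}$ is a well-defined map $\mathrm{Hom}_{VFB}\bigl(VFB(D),S\bigr)\to\mathrm{Hom}_{VFB}\bigl(VFB(D'),S\bigr)$: a composite of two maps each of which intertwines the operations $\ast$ and $\circ$ again intertwines them, so $f\circ\phi^{-1}$ is indeed a virtual flat biquandle homomorphism into $S$. The assignment $g\mapsto g\circ\phi$ is constructed in exactly the same way, and the two are mutually inverse since $\phi\circ\phi^{-1}=\mathrm{id}$ and $\phi^{-1}\circ\phi=\mathrm{id}$. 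Hence the two hom-sets are in bijection, and in particular have the same cardinality.

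Finally I would observe that this cardinality is a genuine (finite) non-negative integer, so that ``invariant'' is meaningful: $VFB(D)$ is generated by the finitely many v-arcs of $D$, so a homomorphism out of it is determined by its values on those generators, whence $\#\,\mathrm{Hom}_{VFB}(VFB(D),S)\le |S|^{m}$ where $m$ is the number of v-arcs of $D$ and $S$ is finite. Combining the bijection above with induction on the number of flat Reidemeister moves relating two diagrams of $L$ then yields that $vc(L,S)$ is well defined, i.e.\ an invariant of $L$. I do not expect any real obstacle here: the substantive work is already contained in Theorem~4.4, and the only points needing a line of care are that ``homomorphism of virtual flat biquandles'' is closed under composition (so that precomposition with $\phi^{-1}$ lands in the right hom-set) and that $VFB(D)$ is finitely generated (so that the count is finite); both are immediate from the definitions.
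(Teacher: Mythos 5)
Your argument is correct and is exactly the route the paper intends: the corollary is stated without proof as an immediate consequence of the preceding theorem, since an isomorphism $VFB(L)\to VFB(L')$ induces a bijection of hom-sets into any fixed $S$ by pre-composition. Your extra remarks on finiteness and on induction over a sequence of moves are fine but add nothing beyond what the paper treats as evident.
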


Similar to the quandle coloring invariant, the counting invariant described above also can be pictured as a coloring of $L$. Recall that a v-arc is a part of the diagram from a virtual crossing to the next virtual crossing. Then the counting invariant mentioned in Corollary 4.4 is equivalent to the number of assignments of an element of $S$ to each v-arc of $L$, such that at each virtual crossing the relation in Figure 18 is satisfied.

Here we give a simple example of the fundamental virtual flat biquandle and the counting invariant.
\begin{center}
\includegraphics{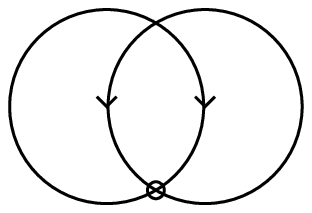}
\centerline{\small Figure 20}
\end{center}
Consider the flat virtual Hopf link $H$ in Figure 20. According to the definition of the fundamental virtual flat biquandle, we have
\begin{center}
$VFB(H)=<x, y|x\ast y=x, y\circ x=y>$.
\end{center}
On the other hand, it is obvious that $VFB(T)=<x, y>$, here $T$ denotes the trivial 2-component link. We claim that these two virtual flat biquandle are different. In order to see this, let us consider a set $S=\{x, y\}$ with a bijection $\psi(x)=y$ and $\psi(y)=x$, then $S$ is a constant-action virtual flat biquandle. Direct calculation shows that $vc(H, S)=0$ but $vc(T, S)=4$.

It is easy to show that a $\mathds{Z}[s]/(s^2)$-module $S$ with two binary operations $S_a(b)=-sa+(1-s)b$ and $T_a(b)=sa+(1+s)b$ is a virtual flat biquandle. This is exactly the algebraic structure we used at the beginning of this section, see Figure 16. In particular if $L$ is a $($flat$)$ virtual $n$-braid, by assigning $x_1, \cdots, x_n$ to the beginning of each strand one will obtain $n$ elements of the virtual flat biquandle that generated by $\{x_1, \cdots, x_n\}$ at the ends of these strands. This generalizes the matrix invariant discussed in Section 2. We remark that if we replace $\mathds{Z}[s]/(s^2)$ with a commutative ring $R$ without zero divisors, it was proved in \cite{Kau2012} that the general affine linear flat biquandle $($recall that a virtual flat biquandle is also a flat biquandle$)$ with coefficients in $R$ has the form $S_a(b)=\alpha b+k$ and $T_a(b)=\alpha^{-1}b-\alpha^{-1}k$, here $\alpha$ is an invertible element of $R$ and $k$ is an element of $S$. According to our discussion in Section 2, if we add two affine binary operations at each real crossing and another two affine binary operations at each virtual crossing of a virtual string link simultaneously, and assume we are working with a commutative ring $R$ without zero divisors, then it seems that the Burau representation is the unique meaningful solution.

\section{Cocycle invariants}
The cohomology theory of a rack was introduced by Fenn, Rourke and Sanderson in \cite{Fen1995,Fen1996}. Later the cohomology theory of a quandle was proposed by Carter, Jelsovsky, Kamada, Langford and Saito in \cite{Car2003}. With a fixed quandle and a 2-cocycle one can construct a state-sum invariant for knots. For biquandle, the cocycle invariants were defined using the Yang-Baxter cohomology theory \cite{Car2004}. In 2009 Ceniceros and Nelson introduced the virtual Yang-Baxter cocycle invariants for virtual biquandle \cite{Cen2009}. In this section we want to discuss the enhancement techniques for the counting invariant introduced in Section 4.

Given a virtual flat biquandle $S$, let $C_n(S)$ denote the free abelian group generated by $n$-tuples $(a_1, \cdots, a_n)$ . If $n=0$ we set $C_0(S)=0$. Consider the boundary map $\partial_n: C_n(S)\rightarrow C_{n-1}(S)$ defined by
\begin{center}
$\partial_n(a_1, \cdots, a_n)=\sum\limits_{i=1}^n(-1)^{i}((a_1\ast a_i, \cdots, a_{i-1}\ast a_i, a_{i+1}, \cdots, a_n)-(a_1, \cdots, a_{i-1}, a_{i+1}\circ a_i, \cdots, a_n\circ a_i))$
\end{center}
for $n\geq2$ and $\partial_n=0$ for $n\leq1$. We remark that the boundary map used by Ceniceros and Nelson in \cite{Cen2009} to define the $S$-homology is a special case of $\partial_n$. In fact when $S$ is a constant-action virtual flat biquandle the the boundary map defined above is exactly the one used in \cite{Cen2009}. On the other hand, assume $S$ is a virtual flat biquandle, it is an interesting exercise to check that the boundary map used in the homology theory of the set-theoretic Yang-Baxter equation \cite{Car2004} coincides with the boundary map $(-1)^{n+1}\partial_n$. Note that Lemma 5.1 below is an evident corollary of this fact, but we still give a direct proof here.
\begin{lemma}
$\partial^2=0$.
\end{lemma}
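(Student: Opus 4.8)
The plan is to compute $\partial_{n-1}\partial_n$ directly on a generator $(a_1,\dots,a_n)$ and show that all terms cancel in pairs. Expanding $\partial_n(a_1,\dots,a_n)$ gives a sum over $i$ of a ``left'' $(n-1)$-tuple coming from applying $S_{a_i}$ to the entries before position $i$, minus a ``right'' $(n-1)$-tuple coming from applying $T_{a_i}$ to the entries after position $i$. Applying $\partial_{n-1}$ to each of these produces a double sum indexed by a pair $(i,j)$ with $i\ne j$, and each resulting $(n-2)$-tuple is of one of four types according to whether the $i$-index and the $j$-index each acted as a ``left'' ($S$) term or a ``right'' ($T$) term. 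So the whole expression $\partial_{n-1}\partial_n(a_1,\dots,a_n)$ is a signed sum of four families of $(n-2)$-tuples.

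The key step is to match, for each unordered pair of positions, the term where position $i$ is processed first and then position $j$, with the term where the roles are swapped, and to check that the signs are opposite. The sign bookkeeping is the familiar simplicial one: when we delete position $i$ and then position $j$ with $j>i$, the second index shifts, so the combined sign $(-1)^i(-1)^{j-1}$ must be compared with the sign $(-1)^j(-1)^i$ from deleting in the other order; these differ by $-1$, which is what forces cancellation. What has to be verified on top of this is that the two $(n-2)$-tuples being matched are literally equal as elements of $C_{n-2}(S)$, and this is precisely where the virtual flat biquandle axioms enter. For an entry that lies before both $i$ and $j$ we need $S_{a_j}S_{a_i}=S_{a_i}S_{a_j}$ (axiom 1); for an entry after both we need $T_{a_j}T_{a_i}=T_{a_i}T_{a_j}$ (axiom 1); for an entry between positions $i$ and $j$ we need the mixed commutation $S_{a_j}T_{a_i}=T_{a_i}S_{a_j}$ (axiom 1 again); and — crucially — when position $j$ is processed after position $i$, the subscript $a_j$ may itself have been hit by $S_{a_i}$ or $T_{a_i}$, so one needs $S_{S_{a_i}(a_j)}=S_{a_j}$, $T_{S_{a_i}(a_j)}=T_{a_j}$, $S_{T_{a_i}(a_j)}=S_{a_j}$, $T_{T_{a_i}(a_j)}=T_{a_j}$, which is exactly axiom 2. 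Axiom 3 ($S_aT_a=T_aS_a=\mathrm{id}$) is not needed for $\partial^2=0$ itself; it is the inverse-pair relation ensuring the maps are well defined on the nose.

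I would organize the computation by fixing $i<j$ and writing out the four cross-terms: (left $i$, left $j$), (left $i$, right $j$), (right $i$, left $j$), (right $i$, right $j$), each appearing once from expanding $\partial_{n-1}$ of the ``$i$ first'' piece and once from the ``$j$ first'' piece, and then invoking the appropriate instance of axioms 1–2 to identify the two copies and observing the signs are opposite. The main obstacle, and the only place requiring genuine care rather than rote simplicial algebra, is precisely the index-shift sign accounting together with the fact that the deleted-position subscripts get transported by the operations of the other deleted position; keeping straight which subscript is $a_j$ versus $S_{a_i}(a_j)$ versus $T_{a_i}(a_j)$ and confirming axiom 2 collapses all of them to the same operator is where a careless argument would go wrong. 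Since the paper already notes this boundary map is $(-1)^{n+1}$ times the set-theoretic Yang–Baxter boundary of \cite{Car2004}, one could alternatively just cite that $\partial^2=0$ there, but the direct check above is short enough to include.
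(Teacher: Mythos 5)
Your proposal is correct and follows essentially the same route as the paper: a direct expansion of $\partial_{n-1}\partial_n$ into the signed families of cross-terms indexed by ordered pairs $(i,j)$, with cancellation forced by the simplicial index-shift sign and the identification of the matched $(n-2)$-tuples via axioms 1 and 2 (the paper writes out all eight sums explicitly, silently applying the axiom-2 collapse of subscripts such as $S_{S_{a_i}(a_j)}=S_{a_j}$ in its display). Your added observations --- that axiom 3 is not needed, and that one could instead cite the set-theoretic Yang--Baxter homology of \cite{Car2004} --- are consistent with the remark the paper makes immediately before the lemma.
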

\begin{proof}
\begin{flalign*}
&\partial_{n-1}(\partial_n(a_1, \cdots, a_n))&\\
=&\partial_{n-1}(\sum\limits_{i=1}^n(-1)^{i}((a_1\ast a_i, \cdots, a_{i-1}\ast a_i, a_{i+1}, \cdots, a_n)-(a_1, \cdots, a_{i-1}, a_{i+1}\circ a_i, \cdots, a_n\circ a_i)))\\
=&\sum\limits_{j<i}(-1)^{i+j}((a_1\ast a_i)\ast a_j, \cdots, (a_{j-1}\ast a_i)\ast a_j, a_{j+1}\ast a_i, \cdots, a_{i-1}\ast a_i, a_{i+1}, \cdots, a_n)\\
&-\sum\limits_{j<i}(-1)^{i+j}(a_1\ast a_i, \cdots, a_{j-1}\ast a_i, (a_{j+1}\ast a_i)\circ a_j, \cdots, (a_{i-1}\ast a_i)\circ a_j, a_{i+1}\circ a_j, \cdots, a_n\circ a_j)\\
&+\sum\limits_{j>i}(-1)^{i+j+1}((a_1\ast a_i)\ast a_j, \cdots, (a_{i-1}\ast a_i)\ast a_j, a_{i+1}\ast a_j, \cdots, a_{j-1}\ast a_j, a_{j+1}, \cdots, a_n)\\
&-\sum\limits_{j>i}(-1)^{i+j+1}(a_1\ast a_i, \cdots, a_{i-1}\ast a_i, a_{i+1}, \cdots, a_{j-1}, a_{j+1}\circ a_j, \cdots, a_n\circ a_j)\\
&-\sum\limits_{j<i}(-1)^{i+j}(a_1\ast a_j, \cdots, a_{j-1}\ast a_j, a_{j+1}, \cdots, a_{i-1}, a_{i+1}\circ a_i, \cdots, a_n\circ a_i)\\
&+\sum\limits_{j<i}(-1)^{i+j}(a_1, \cdots, a_{j-1}, a_{j+1}\circ a_j, \cdots, a_{i-1}\circ a_j, (a_{i+1}\circ a_i)\circ a_j, \cdots, (a_n\circ a_i)\circ a_j)\\
&-\sum\limits_{j>i}(-1)^{i+j+1}(a_1\ast a_j, \cdots, a_{i-1}\ast a_j, (a_{i+1}\circ a_i)\ast a_j, \cdots, (a_{j-1}\circ a_i)\ast a_j, a_{j+1}\circ a_i, \cdots, a_n\circ a_i)\\
&+\sum\limits_{j>i}(-1)^{i+j+1}(a_1, \cdots, a_{i-1}, a_{i+1}\circ a_i, \cdots, a_{j-1}\circ a_i, (a_{j+1}\circ a_i)\circ a_j, \cdots, (a_n\circ a_i)\circ a_j)\\
=&0
\end{flalign*}
\end{proof}

Therefore $C_{\ast}(S)=\{C_n(S), \partial_n\}$ is a chain complex. Let $C_n'(S)$ be a subset of $C_n(S)$ generated by $(a_1, \cdots, a_i, a_{i+1}, \cdots, a_n)+(a_1, \cdots, a_{i+1}\circ a_i, a_i\ast a_{i+1}, \cdots, a_n)$ for $n\geq 2$, and $C_n'(S)=0$ for $n\leq 1$. Note that if $S$ is a trivial virtual flat biquandle, then $(a_1, \cdots, a_i, a_{i+1}, \cdots, a_n)+(a_1, \cdots, a_{i+1}\circ a_i, a_i\ast a_{i+1}, \cdots, a_n)$ reduces to the ``transposition symmetrizers" \cite{Prz2014}
\begin{center}
$(a_1, \cdots, a_i, a_{i+1}, \cdots, a_n)+(a_1, \cdots, a_{i+1}, a_i, \cdots, a_n)$.
\end{center}
\begin{lemma}
$C_{\ast}'(S)=\{C_n'(S), \partial_n\}$ is a sub-complex of $C_{\ast}(S)$.
\end{lemma}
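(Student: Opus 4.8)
The plan is to prove exactly what ``sub-complex'' asks for, namely $\partial_n\big(C_n'(S)\big)\subseteq C_{n-1}'(S)$. Write $P_i(a_1,\dots,a_n):=(a_1,\dots,a_i,a_{i+1},\dots,a_n)+(a_1,\dots,a_{i+1}\circ a_i,\,a_i\ast a_{i+1},\dots,a_n)$ for the generic generator of $C_n'(S)$ at position $i$, and denote its two tuples by $(a_\bullet)$ and $(b_\bullet)$, so that $b_i=a_{i+1}\circ a_i=T_{a_i}(a_{i+1})$, $b_{i+1}=a_i\ast a_{i+1}=S_{a_{i+1}}(a_i)$ and $b_k=a_k$ for $k\notin\{i,i+1\}$. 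It then suffices to show that $\partial_n P_i(a_1,\dots,a_n)$ is an integral combination of terms $P_m(\,\cdot\,)$ in degree $n-1$. I would expand $\partial_n P_i$ as a sum over the deleted index $j$, tracking the two flavours of summand: the ``$\ast$-summand'', in which $S_{a_j}$ (resp.\ $S_{b_j}$) is applied to all entries left of $j$, and the ``$\circ$-summand'', in which $T_{a_j}$ (resp.\ $T_{b_j}$) is applied to all entries right of $j$.

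For an index $j\notin\{i,i+1\}$ the deletion leaves the distinguished pair of entries in place (at positions $i,i+1$, or $i-1,i$ when $j<i$), merely hit by a common $S_{a_j}$ or $T_{a_j}$. Hence the $\ast$-summand from $(a_\bullet)$ and the $\ast$-summand from $(b_\bullet)$ add up to a single symmetrizer $P_{\bar m}(\,\cdot\,)$ of degree $n-1$, and likewise the two $\circ$-summands add up to another one. Verifying this reduces to a short list of identities: for $j<i$ one needs $(a_{i+1}\circ a_i)\circ a_j=(a_{i+1}\circ a_j)\circ(a_i\circ a_j)$ and $(a_i\ast a_{i+1})\circ a_j=(a_i\circ a_j)\ast(a_{i+1}\circ a_j)$, and for $j>i+1$ the analogues with the roles of $\ast$ and $\circ$ exchanged; each follows at once from the commutativity axiom (1) together with the stability axiom (2) of a virtual flat biquandle, and the matching of the remaining entries is automatic since $b_k=a_k$ away from $i,i+1$.

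The crux is the four pairs $(j,\text{flavour})$ with $j\in\{i,i+1\}$, producing eight terms in all, which I claim cancel pairwise and contribute nothing. For instance the $\ast$-summand of $(a_\bullet)$ for $j=i$ is $(-1)^i(a_1\ast a_i,\dots,a_{i-1}\ast a_i,a_{i+1},\dots,a_n)$, while the $\ast$-summand of $(b_\bullet)$ for $j=i+1$ simplifies — using $S_{S_{a_{i+1}}(a_i)}=S_{a_i}$ (axiom 2) on the left entries and $b_i\ast b_{i+1}=S_{a_i}T_{a_i}(a_{i+1})=a_{i+1}$ (axioms 2 and 3) on the middle entry — to $(-1)^{i+1}$ times the same tuple, so the two cancel; the other three pairings cancel identically, invoking $S_{T_b(a)}=S_a$, $T_{S_b(a)}=T_a$, $T_{T_b(a)}=T_a$ and $S_aT_a=T_aS_a=\mathrm{id}$. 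Assembling the surviving contributions, $\partial_n P_i$ is a signed sum of degree-$(n-1)$ symmetrizers, hence lies in $C_{n-1}'(S)$, proving the lemma. I expect the only genuine obstacle to be this bookkeeping at $j=i,i+1$: laying out the eight boundary terms and confirming the signs $(-1)^i$ versus $(-1)^{i+1}$ line up so the cancellation is exact; the case $j\notin\{i,i+1\}$ is a mechanical application of the axioms.
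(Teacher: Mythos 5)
Your proposal is correct and follows essentially the same route as the paper: expand $\partial_n$ on the generator, observe that the eight terms coming from deleting positions $i$ and $i+1$ in the two tuples cancel in four pairs via the axioms $S_{S_b(a)}=S_{T_b(a)}=S_a$, $T_{S_b(a)}=T_{T_b(a)}=T_a$ and $S_aT_a=T_aS_a=\mathrm{id}$, and that the remaining terms pair up into degree-$(n-1)$ symmetrizers using commutativity plus stability. The identities you isolate (e.g.\ $(a_{i+1}\circ a_i)\circ a_j=(a_{i+1}\circ a_j)\circ(a_i\circ a_j)$ and its companions) are exactly the ones implicitly used in the paper's explicit term-by-term computation.
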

\begin{proof}
It suffices to show that $\partial_n(C_n'(S))\subset C_{n-1}'(S)$. One computes
\begin{flalign*}
&\partial_n((a_1, \cdots, a_i, a_{i+1}, \cdots, a_n)+(a_1, \cdots, a_{i+1}\circ a_i, a_i\ast a_{i+1}, \cdots, a_n))&\\
=&\sum\limits_{k=1}^{i-1}(-1)^k(a_1\ast a_k, \cdots, a_{k-1}\ast a_k, a_{k+1}, \cdots, a_n)-\sum\limits_{k=1}^{i-1}(-1)^k(a_1, \cdots, a_{k-1}, a_{k+1}\circ a_k, \cdots, a_n\circ a_k)\\
&+(-1)^i(a_1\ast a_i, \cdots, a_{i-1}\ast a_i, a_{i+1}, \cdots, a_n)-(-1)^i(a_1, \cdots, a_{i-1}, a_{i+1}\circ a_i, \cdots, a_n\circ a_i)\\
&+(-1)^{i+1}(a_1\ast a_{i+1}, \cdots, a_i\ast a_{i+1}, a_{i+2}, \cdots, a_n)-(-1)^{i+1}(a_1, \cdots, a_i, a_{i+2}\circ a_{i+1}, \cdots, a_n\circ a_{i+1})\\
&+\sum\limits_{k=i+2}^{n}(-1)^k(a_1\ast a_k, \cdots, a_{k-1}\ast a_k, a_{k+1}, \cdots, a_n)-\sum\limits_{k=i+2}^{n}(-1)^k(a_1, \cdots, a_{k-1}, a_{k+1}\circ a_k, \cdots, a_n\circ a_k)\\
&+\sum\limits_{k=1}^{i-1}(-1)^k(a_1\ast a_k, \cdots, a_{k-1}\ast a_k, a_{k+1}, \cdots, a_{i+1}\circ a_i, a_i\ast a_{i+1}, \cdots, a_n)\\
&-\sum\limits_{k=1}^{i-1}(-1)^k(a_1, \cdots, a_{k-1}, a_{k+1}\circ a_k, \cdots, (a_{i+1}\circ a_i)\circ a_k, (a_i\ast a_{i+1})\circ a_k, \cdots, a_n\circ a_k)\\
&+(-1)^i(a_1\ast a_{i+1}, \cdots, a_{i-1}\ast a_{i+1}, a_i\ast a_{i+1}, \cdots, a_n)-(-1)^i(a_1, \cdots, a_i, a_{i+2}\circ a_{i+1}, \cdots, a_n\circ a_{i+1})\\
&+(-1)^{i+1}(a_1\ast a_i, \cdots, a_{i-1}\ast a_i, a_{i+1}, \cdots, a_n)-(-1)^{i+1}(a_1, \cdots, a_{i-1}, a_{i+1}\circ a_i, \cdots, a_n\circ a_i)\\
&+\sum\limits_{k=i+2}^{n}(-1)^k(a_1\ast a_k, \cdots, (a_{i+1}\circ a_i)\ast a_k, (a_i\ast a_{i+1})\ast a_k, \cdots, a_{k-1}\ast a_k, a_{k+1}, \cdots, a_n)\\
&-\sum\limits_{k=i+2}^{n}(-1)^k(a_1, \cdots, a_{i+1}\circ a_i, a_i\ast a_{i+1}, \cdots, a_{k-1}, a_{k+1}\circ a_k, \cdots, a_n\circ a_k)\\
=&\sum\limits_{k=1}^{i-1}(-1)^k((a_1\ast a_k, \cdots, a_{k-1}\ast a_k, a_{k+1}, \cdots, a_i, a_{i+1}, \cdots, a_n)\\
&+(a_1\ast a_k, \cdots, a_{k-1}\ast a_k, a_{k+1}, \cdots, a_{i+1}\circ a_i, a_i\ast a_{i+1}, \cdots, a_n))\\
&-\sum\limits_{k=1}^{i-1}(-1)^k((a_1, \cdots, a_{k-1}, a_{k+1}\circ a_k, \cdots, a_i\circ a_k, a_{i+1}\circ a_k, \cdots, a_n\circ a_k)\\
&+(a_1, \cdots, a_{k-1}, a_{k+1}\circ a_k, \cdots, (a_{i+1}\circ a_i)\circ a_k, (a_i\ast a_{i+1})\circ a_k, \cdots, a_n\circ a_k))\\
&+\sum\limits_{k=i+2}^{n}(-1)^k((a_1\ast a_k, \cdots, a_i\ast a_k, a_{i+1}\ast a_k, \cdots, a_{k-1}\ast a_k, a_{k+1}, \cdots, a_n)\\
&+(a_1\ast a_k, \cdots, (a_{i+1}\circ a_i)\ast a_k, (a_i\ast a_{i+1})\ast a_k, \cdots, a_{k-1}\ast a_k, a_{k+1}, \cdots, a_n))\\
&-\sum\limits_{k=i+2}^{n}(-1)^k((a_1, \cdots, a_i, a_{i+1}, \cdots, a_{k-1}, a_{k+1}\circ a_k, \cdots, a_n\circ a_k)\\
&+(a_1, \cdots, a_{i+1}\circ a_i, a_i\ast a_{i+1}, \cdots, a_{k-1}, a_{k+1}\circ a_k, \cdots, a_n\circ a_k))\\
\in&C_{n-1}'(S)
\end{flalign*}
\end{proof}

Let $C_{\ast}^{VF}(S)$ be the quotient complex $C_{\ast}(S)/C_{\ast}'(S)$ and $A$ an abelian group without 2-torsion, then we consider the homology and cohomology groups
\begin{center}
$H_n^{VF}(S; A)=H_n(C_{\ast}^{VF}(S)\otimes A)$ and $H^n_{VF}(S; A)=H^n(\text{Hom}(C_{\ast}^{VF}(S), A))$.
\end{center}
Before defining the cocycle invariants associated to virtual flat biquandle coloring, we need to introduce another boundary map of $C_{\ast}(S)$. As usual the notation $\widehat{a_i}$ denotes the removal of $a_i$. We define another boundary map $d_n: C_n(S)\rightarrow C_{n-1}(S)$ as below
\begin{center}
$d_n(a_1, \cdots, a_n)=\sum\limits_{i=1}^{n-1}(-1)^i((a_1, \cdots, \widehat{a_i}, \cdots, a_n)-(a_1, \cdots, \widehat{a_i}, \cdots, a_{n-1}, a_n\circ a_i))$.
\end{center}
\begin{lemma}
$d^2=0$.
\end{lemma}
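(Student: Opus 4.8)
The plan is to expand $d_{n-1}(d_n(a_1,\dots,a_n))$ as a double sum over the two coordinates that are deleted and to check that the resulting terms cancel in pairs, in the style of the computations in the proofs of Lemmas 5.1 and 5.2. Since $d_m=0$ for $m\le 1$, the identity is trivial unless $n\ge 3$, so assume $n\ge 3$. Applying $d_n$ produces, for each $i\in\{1,\dots,n-1\}$ with overall sign $(-1)^i$, the two $(n-1)$-tuples $A_i=(a_1,\dots,\widehat{a_i},\dots,a_n)$ and $B_i=(a_1,\dots,\widehat{a_i},\dots,a_{n-1},a_n\circ a_i)$. The observation that keeps the bookkeeping manageable is that $a_n$ is never deleted and always stays the last entry (both deleted indices lie in $\{1,\dots,n-1\}$); hence when $d_{n-1}$ is applied the deleted index $j$ again ranges over $\{1,\dots,n-1\}\setminus\{i\}$, and the ``act on the last coordinate'' step is always performed by $a_j$. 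Tracking the index shift caused by the first deletion, the summand of $d_{n-1}A_i$ or of $d_{n-1}B_i$ that deletes the original index $j$ carries the sign $\varepsilon_{ij}$, equal to $(-1)^{j}$ if $j<i$ and to $(-1)^{j+1}$ if $j>i$.

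Carrying this out yields
\[
d_{n-1}\bigl(d_n(a_1,\dots,a_n)\bigr)=\sum_{i}\ \sum_{j\neq i}(-1)^i\varepsilon_{ij}\bigl(P_{ij}-Q_{ij}-R_{ij}+T_{ij}\bigr),
\]
where $P_{ij}$ deletes $a_i$ and $a_j$ and keeps $a_n$ last, $Q_{ij}$ has $a_n\circ a_j$ last, $R_{ij}$ has $a_n\circ a_i$ last, and $T_{ij}$ has $(a_n\circ a_i)\circ a_j$ last. I would then record three symmetries: $P_{ij}=P_{ji}$ and $R_{ij}=Q_{ji}$ are immediate from the descriptions, and $T_{ij}=T_{ji}$ is the identity $(a_n\circ a_i)\circ a_j=(a_n\circ a_j)\circ a_i$, i.e.\ $T_{a_i}T_{a_j}=T_{a_j}T_{a_i}$, which is part of axiom (1) in the definition of a virtual flat biquandle. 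This axiom is the only algebraic input, and it is genuinely needed: already for $n=3$ one gets $d_2(d_3(a_1,a_2,a_3))=(a_3\circ a_1)\circ a_2-(a_3\circ a_2)\circ a_1$, which vanishes exactly because of it.

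Given the symmetries, the cancellation is the usual simplicial sign computation. For a fixed unordered pair $\{p,q\}$ the ordered versions $(i,j)=(p,q)$ and $(i,j)=(q,p)$ contribute with opposite signs, since a short case check ($p<q$ versus $p>q$) in the definition of $\varepsilon$ gives $(-1)^p\varepsilon_{pq}+(-1)^q\varepsilon_{qp}=0$. Hence the $P$-terms cancel among themselves, the $T$-terms cancel among themselves (using $T_{ij}=T_{ji}$), and the $Q$-terms cancel against the $R$-terms (using $R_{ij}=Q_{ji}$); therefore $d^2=0$. Accidental coincidences between the four families (e.g.\ $a_n\circ a_j=a_n$, or $a_n\circ a_i=a_n\circ a_j$) cause no problem, since the cancellation already happens within each family.

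The step I expect to demand the most care is the sign bookkeeping in the first paragraph — getting $\varepsilon_{ij}$ right after the first deletion, and confirming that $P,Q,R,T$ really exhaust all terms, which is precisely where ``$a_n$ is never deleted'' is used. Everything after that reduces to the identity $(-1)^p\varepsilon_{pq}+(-1)^q\varepsilon_{qp}=0$ and the commutativity $T_aT_b=T_bT_a$, and no deeper property of virtual flat biquandles enters.
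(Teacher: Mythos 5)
Your proof is correct and follows essentially the same route as the paper's: expand $d_{n-1}d_n$ as a double sum over the two deleted indices and cancel terms in pairs via the sign antisymmetry, with the $(a_n\circ a_i)\circ a_j$ terms cancelling because $T_{a_i}T_{a_j}=T_{a_j}T_{a_i}$. The only difference is presentational — the paper writes out the eight sums and asserts the cancellation, while you make explicit (and correctly verify, e.g.\ in the $n=3$ case) that the commutativity axiom is the one piece of algebraic input actually used.
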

\begin{proof}
\begin{flalign*}
&d_{n-1}(d_n(a_1, \cdots, a_n))&\\
=&d_{n-1}(\sum\limits_{i=1}^n(-1)^i((a_1, \cdots, \widehat{a_i}, \cdots, a_n)-(a_1, \cdots, \widehat{a_i}, \cdots, a_{n-1}, a_n\circ a_i)))\\
=&\sum\limits_{j<i}(-1)^{i+j}(a_1, \cdots, \widehat{a_j}, \cdots, \widehat{a_i}, \cdots, a_n)-\sum\limits_{j<i}(-1)^{i+j}(a_1, \cdots, \widehat{a_j}, \cdots, \widehat{a_i}, \cdots, a_{n-1}, a_n\circ a_j)\\
&+\sum\limits_{j>i}(-1)^{i+j+1}(a_1, \cdots, \widehat{a_i}, \cdots, \widehat{a_j}, \cdots, a_n)-\sum\limits_{j>i}(-1)^{i+j+1}(a_1, \cdots, \widehat{a_i}, \cdots, \widehat{a_j}, \cdots, a_{n-1}, a_n\circ a_j)\\
&-\sum\limits_{j<i}(-1)^{i+j}(a_1, \cdots, \widehat{a_j}, \cdots, \widehat{a_i}, \cdots, a_{n-1}, a_n\circ a_i)\\
&+\sum\limits_{j<i}(-1)^{i+j}(a_1, \cdots, \widehat{a_j}, \cdots, \widehat{a_i}, \cdots, a_{n-1}, (a_n\circ a_i)\circ a_j)\\
&-\sum\limits_{j>i}(-1)^{i+j+1}(a_1, \cdots, \widehat{a_i}, \cdots, \widehat{a_j}, \cdots, a_{n-1}, a_n\circ a_i)\\
&+\sum\limits_{j>i}(-1)^{i+j+1}(a_1, \cdots, \widehat{a_i}, \cdots, \widehat{a_j}, \cdots, a_{n-1}, (a_n\circ a_i)\circ a_j)\\
=&0
\end{flalign*}
\end{proof}

It follows that $C_{\ast}^{SF}(S)=\{C_n(S), d_n\}$ is a chain complex and we can define the the homology and cohomology groups
\begin{center}
$H_n^{SF}(S; A)=H_n(C_{\ast}^{SF}(S)\otimes A)$ and $H^n_{SF}(S; A)=H^n(\text{Hom}(C_{\ast}^{SF}(S), A))$.
\end{center}

Let $L$ be a flat virtual link, and $S$ a finite virtual flat biquandle. Assume we have a map $\phi: S\times S\rightarrow A$, then for a fixed coloring $\theta: VFB(L)\rightarrow S$ we can assign a (Boltzmann) weight to each virtual crossing of $L$. For example for the virtual crossing $\tau$ given in Figure 18 the associated weight  $B(\tau, \theta)=\phi(a, b)$. Then the state-sum $\Phi_{\phi}(L)$, which takes value in the group ring $\mathds{Z}[A]$, has the following expression
\begin{center}
$\Phi_{\phi}(L)=\sum\limits_{\theta}\prod\limits_{\tau}B(\tau, \theta)$,
\end{center}
here the product is taken over all virtual crossings and the sum is taken over all colorings. Note that the map $\phi: S\times S\rightarrow A$ can be naturally regarded as an element of $C^2_{VF}(S; A)$ or $C^2_{SF}(S; A)$ by linear extensions. For simplicity we will still use $\phi$ to denote it.
\begin{theorem}
Assume the map $\phi: S\times S\rightarrow A$ represents a 2-cocycle in both $C^2_{VF}(S; A)$ and $C^2_{SF}(S; A)$, then the state-sum $\Phi_{\phi}(L)$ is an invariant of $L$.
\end{theorem}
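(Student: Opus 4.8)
The plan is to run the usual state-sum argument move by move. By Theorem 4.3, for any flat Reidemeister move carrying a diagram $L$ to a diagram $L'$ there is a bijection between the colorings $\theta\colon VFB(L)\to S$ and the colorings $\theta'\colon VFB(L')\to S$, and this bijection agrees with the obvious identification of v-arcs outside the small disc in which the move is performed. A Boltzmann weight is attached only at virtual crossings, so the product over the virtual crossings outside the disc is unchanged; hence it suffices to check, for each move and each coloring, that the product of the weights at the virtual crossings lying inside the disc is the same before and after. Summing over colorings then gives $\Phi_{\phi}(L)=\Phi_{\phi}(L')$. The flat $\Omega_1$, $\Omega_2$, $\Omega_3$ moves involve no virtual crossing at all (a v-arc is allowed to run through flat crossings), so for these moves there is literally nothing to check.

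For $\Omega_2'$ the disc contains two virtual crossings; if the incoming v-arcs are colored $a$ and $b$, the first crossing carries weight $\phi(a,b)$ and, by the coloring rule of Figure 18, the second carries weight $\phi(b\circ a,a\ast b)$, so their product is $\phi(a,b)+\phi(b\circ a,a\ast b)\in A\subseteq\mathds{Z}[A]$, and this is trivial precisely because $\phi$ vanishes on $C'_2(S)$, i.e. because $\phi\in C^2_{VF}(S;A)$. For $\Omega_1'$ the disc contains a single virtual self-crossing; by the first flat-biquandle axiom (which holds for a virtual flat biquandle by the proof of Theorem 4.3) the small loop must be colored so that the ordered pair $(p,q)$ of colors meeting at this crossing satisfies $q\circ p=p$ and $p\ast q=q$ --- for instance $p=a\circ a$, $q=a$, with $a$ the color of the strand. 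Then the defining relation of $C'_2(S)$, applied to the generator $(p,q)+(q\circ p,p\ast q)=(p,q)+(p,q)$, gives $2\,\phi(p,q)=0$, so the weight $\phi(p,q)$ is trivial because $A$ has no $2$-torsion. (This is precisely where the hypothesis on $A$ is used; the other orientation variants of $\Omega_1'$ and $\Omega_2'$ are handled the same way, using the other element furnished by the first flat-biquandle axiom.)

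The Reidemeister-III-type moves are where the two cocycle identities enter. For the purely virtual $\Omega_3'$, label the three v-arcs entering the disc $a_1,a_2,a_3$ and push these colors through the three virtual crossings on each of the two sides of the move, using Figure 18 and the axioms of Definition 4.2; the six weights so produced, taken with the signs read off from the picture, assemble into $\pm\,\phi\bigl(\partial_3(a_1,a_2,a_3)\bigr)$, which vanishes since $\phi$ is a $2$-cocycle of $C^{VF}_{\ast}(S;A)$ --- and this expression descends to the quotient complex precisely because of Lemma 5.2 together with $\phi|_{C'_2}=0$. For the mixed move $\Omega_3^s$ --- one flat crossing, two virtual crossings --- the flat crossing carries no weight, and the same bookkeeping turns the four remaining weights into $\pm\,\phi\bigl(d_3(a_1,a_2,a_3)\bigr)$, where $a_3$ denotes the strand meeting the other two at virtual crossings; this vanishes since $\phi$ is a $2$-cocycle of $C^{SF}_{\ast}(S;A)$. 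The other orientation variants of $\Omega_3'$ and $\Omega_3^s$ reduce, after relabeling, to these same two identities, so every generator of the flat Reidemeister equivalence has been checked and $\Phi_{\phi}(L)$ is an invariant.

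The one genuine difficulty is the bookkeeping in the preceding paragraph: one must fix, once and for all, which output of a virtual crossing in Figure 18 is ``$a\ast b$'' and which is ``$a\circ b$'', in what order the three crossings of an $\Omega_3'$ or $\Omega_3^s$ move are traversed, and how the strand orientations enter the signs, and then verify that with these conventions the signed sum of weights coming from each move really is $\phi\circ\partial_3$, respectively $\phi\circ d_3$, rather than some close variant of it. I expect this --- together with the bookkeeping check that it is the $C^{VF}$-cocycle condition that controls the all-virtual $\Omega_3'$ and the $C^{SF}$-cocycle condition that controls the mixed $\Omega_3^s$, so that both hypotheses on $\phi$ are really needed --- to be the only nontrivial point; everything else is direct substitution using the virtual flat biquandle axioms and the absence of $2$-torsion in $A$.
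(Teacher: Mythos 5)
Your proposal is correct and follows essentially the same route as the paper: it reduces everything to the three conditions on $\phi$ (vanishing on $C_2'(S)$ for $\Omega_1'$ and $\Omega_2'$, using the absence of $2$-torsion for the kink, and the $\partial_3$- and $d_3$-cocycle identities for $\Omega_3'$ and $\Omega_3^s$ respectively), leaving the same orientation/bookkeeping details to the reader that the paper does. The only cosmetic difference is that the paper derives the ``ramifications'' $\phi(b,a\ast b)+\phi(a,b\ast a)=0$ and $\phi(b\circ a,a)+\phi(a\circ b,b)=0$ explicitly for the remaining $\Omega_2'$ variants, while you defer them to ``the same argument.''
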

\begin{proof}
According to the assumption, the map $\phi: S\times S\rightarrow A$ satisfies the 2-cocycle conditions of $C^{\ast}_{VF}(S)$ and $C^{\ast}_{SF}(S)$ simultaneously. In other words, for any $a, b, c\in S$, $\phi$ should satisfy the following three conditions:
\begin{enumerate}
  \item $\phi(a, b)+\phi(b\circ a, a\ast b)=0$,
  \item $-\phi(b, c)+\phi(b\circ a, c\circ a)+\phi(a\ast b, c)-\phi(a, c\circ b)-\phi(a\ast c, b\ast c)+\phi(a, b)=0$,
  \item $-\phi(b, c)+\phi(b, c\circ a)+\phi(a, c)-\phi(a, c\circ b)=0$.
\end{enumerate}
From the first condition we conclude that (recall that $A$ has no 2-torsion)
\begin{center}
$\phi(a, a\ast a)+\phi((a\ast a)\circ a, a\ast(a\ast a))=0\Rightarrow\phi(a, a\ast a)+\phi(a, a\ast a)=0\Rightarrow\phi(a, a\ast a)=0$,
$\phi(a\circ a, a)+\phi(a\circ (a\circ a), (a\circ a)\ast a)=0\Rightarrow\phi((a\circ a, a)+\phi((a\circ a, a)=0\Rightarrow\phi((a\circ a, a)=0$,
\end{center}
and
\begin{center}
$\phi(b, a\ast b)+\phi((a\ast b)\circ b, b\ast(a\ast b))=0\Rightarrow\phi(b, a\ast b)+\phi(a, b\ast a)=0$,
$\phi(b\circ a, a)+\phi(a\circ(b\circ a), (b\circ a)\ast a)=0\Rightarrow\phi(b\circ a, a)+\phi(a\circ b, b)=0$.
\end{center}
The rest is a routine verification of the invariance of $\Phi_{\phi}(L)$ under flat Reidemeister moves. We only mention that the first condition and its ramifications guarantee the invariance of $\Phi_{\phi}(L)$ under the flat $\Omega_1'$ and $\Omega_2'$. On the other hand, the second and the third condition make $\Phi_{\phi}(L)$ invariant under one case of flat $\Omega_3'$ and $\Omega_3^s$ respectively. Other cases of flat $\Omega_3'$ and $\Omega_3^s$ can be realized by these two cases together with some flat $\Omega_1'$ and $\Omega_2'$. We leave the details to the reader.
\end{proof}

We give a simple example to show that sometimes the cocycle invariant is stronger than the counting invariant. Consider the flat virtual link $H$ in Figure 20. Let $S$ be the trivial virtual flat biquandle with two elements, i.e.
\begin{center}
$S=\{x, y|x\ast x=x\circ x=x\ast y=x\circ y=x, y\ast x=y\circ x=y\ast y=y\circ y=y\}$.
\end{center}
As before we use $T$ to denote the trivial 2-component link, then we have $vc(H, S)=vc(T, S)=4$. Hence the counting invariant associated with $S$ can not distinguish $H$ from $T$. However the cocycle invariant associated with $S$ can tell the differences between $H$ and $T$. In order to see this, let us consider the map $\phi: S\times S\rightarrow \mathds{Z}$ defined by
\begin{center}
$\phi(x, y)=-\phi(y, x)=1$ and $\phi(x, x)=\phi(y, y)=0$.
\end{center}
It satisfies the conditions in Theorem 5.4. Direct calculation shows that $\Phi_{\phi}(H)=1+(-1)+0+0$ but $\Phi_{\phi}(T)=0+0+0+0$.

In the classical case \cite{Car2003}, the quandle cocycle invariants are actually cohomology invariants, i.e. cohomologous cocycles define the same invariant. However this is not the case for the virtual Yang-Baxter 2-cocycle invariant defined in \cite{Cen2009}. In fact, coboundaries can contribute nontrivially to the cocycle invariant in \cite{Cen2009}. For virtual flat biquandle we have the following result.
\begin{proposition}
Let $\phi$ be a 2-cocycle of $C^2_{VF}(S; A)$ and $C^2_{SF}(S; A)$, and $\phi'$ a 2-coboundary of $C^2_{VF}(S; A)$. Then $\phi+\phi'$ represents a 2-cocycle of $C^2_{SF}(S; A)$ and $\Phi_{\phi}(L)=\Phi_{\phi+\phi'}(L)$ for any flat virtual link $L$.
\end{proposition}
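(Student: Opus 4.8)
The statement splits into two claims: that $\phi+\phi'$ is a $2$-cocycle of $C^2_{SF}(S;A)$, and that it yields the same state-sum as $\phi$. I would prove them in that order.

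\emph{Step 1: $\phi+\phi'$ is a $2$-cocycle of $C^2_{SF}(S;A)$.} Since $\phi$ is one by hypothesis and the cocycle condition is additive, it suffices to check that the coboundary $\phi'$ is a $2$-cocycle of $C^2_{SF}(S;A)$. As $C_1'(S)=0$, a $1$-cochain of $C^*_{VF}(S;A)$ is just a function $f\colon S\to A$ extended linearly, and specializing $\partial_2$ gives $\partial_2(a,b)=(a\ast b)-(a)+(b\circ a)-(b)$, so that $\phi'=\delta^1 f$ has the form $\phi'(a,b)=f(a\ast b)-f(a)+f(b\circ a)-f(b)$. Recalling from the proof of Theorem 5.4 that a cochain $\psi$ is a $2$-cocycle of $C^2_{SF}$ precisely when $-\psi(b,c)+\psi(b,c\circ a)+\psi(a,c)-\psi(a,c\circ b)=0$, I would substitute this expression for $\phi'$; the $f$-values at $a,b,c,c\circ a,c\circ b$ cancel in pairs, leaving
\[
f((c\circ a)\circ b)-f((c\circ b)\circ a)+f(b\ast(c\circ a))-f(b\ast c)+f(a\ast c)-f(a\ast(c\circ b)).
\]
The first difference vanishes because $T_aT_b=T_bT_a$, and the other two because $S_{c\circ a}=S_{T_a(c)}=S_c$ and $S_{c\circ b}=S_{T_b(c)}=S_c$ by axiom 2 of a virtual flat biquandle. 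Hence $\phi'$, and therefore $\phi+\phi'$, is a $2$-cocycle of $C^2_{SF}(S;A)$. (It is also a $2$-cocycle of $C^2_{VF}(S;A)$, being a cocycle plus a coboundary, so Theorem 5.4 applies and $\Phi_{\phi+\phi'}(L)$ is a well-defined invariant.)

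\emph{Step 2: $\Phi_\phi(L)=\Phi_{\phi+\phi'}(L)$.} Fix a coloring $\theta\colon VFB(L)\to S$. In $\mathds{Z}[A]$ the monomial $\prod_\tau B(\tau,\theta)$ is the basis element attached to the group element $\sum_\tau B(\tau,\theta)\in A$, and $B(\tau,\theta)$ depends additively on the cocycle; so the $\theta$-summands of $\Phi_\phi(L)$ and $\Phi_{\phi+\phi'}(L)$ agree as soon as $\sum_\tau\phi'(\tau,\theta)=0$. For that, recall that at a virtual crossing $\tau$ the two v-arcs running in carry colors $a,b$ and the two running out carry $a\ast b$ and $b\circ a$ (Figure 18), so by the formula for $\phi'$ above,
\[
\phi'(\tau,\theta)=\Big(\sum\nolimits_{v\ \mathrm{out\ of}\ \tau} f(\theta(v))\Big)-\Big(\sum\nolimits_{v\ \mathrm{into}\ \tau} f(\theta(v))\Big).
\]
Summing over all virtual crossings of $L$: each v-arc runs out of exactly one virtual crossing (its initial endpoint) and into exactly one (its terminal endpoint), so every term $f(\theta(v))$ appears once with each sign and the total vanishes. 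Thus $\sum_\tau\phi'(\tau,\theta)=0$ for every $\theta$, and $\Phi_\phi(L)=\Phi_{\phi+\phi'}(L)$.

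The computations in Step 1 — pinning down $\partial_2$, the $C^2_{SF}$-cocycle identity, and the cancellations via the biquandle axioms — are routine but require care; the one genuinely load-bearing observation is the telescoping in Step 2, and I do not expect it to present a real obstacle once the ``in/out'' bookkeeping at virtual crossings is fixed. Note that the hypothesis that $A$ has no $2$-torsion is not needed beyond what Theorem 5.4 already uses.
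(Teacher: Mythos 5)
Your proof is correct and follows essentially the same route as the paper: the paper verifies Step 1 by checking $\partial_2 d_3=0$ at the chain level (using the axioms $T_aT_b=T_bT_a$ and $S_{T_b(a)}=S_a$ exactly where you do), and handles Step 2 by writing $\phi'(a,b)=\eta(\partial_2(a,b))$ and observing that the contributions cancel over all virtual crossings. Your explicit in/out telescoping in Step 2 just spells out what the paper leaves as ``easy to observe.''
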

\begin{proof}
First we show that $\phi+\phi'$ represents a 2-cocycle of $C^2_{SF}(S; A)$, it suffices to prove that $\phi'$ represents a 2-cocycle of $C^2_{SF}(S; A)$. For this purpose it is sufficient to check that $\partial_2d_3=0$. In fact
\begin{flalign*}
&\partial_2d_3(a, b, c)&\\
=&\partial_2(-(b, c)+(b, c\circ a)+(a, c)-(a, c\circ b))\\
=&(c)-(c\circ b)-(b\ast c)+(b)-(c\circ a)+((c\circ a)\circ b)+(b\ast c)-(b)\\
&-(c)+(c\circ a)+(a\ast c)-(a)+(c\circ b)-((c\circ b)\circ a)-(a\ast c)+(a)\\
=&0\\
\end{flalign*}

Next we show that $\Phi_{\phi}(L)=\Phi_{\phi+\phi'}(L)$. According to the definition it suffices to show that $\Phi_{\phi'}(L)=\sum\limits_{\theta}0$. For a fixed coloring $\theta$, assuming the 2-coboundary $\phi'=\delta\eta$, where $\delta$ denotes the coboundary map and $\eta\in C^1_{VF}(S; A)$. Let us consider the virtual crossing in Figure 18, the contribution of this virtual crossing is
\begin{center}
$\phi(a, b)=\delta\eta(a, b)=\eta(\partial_2(a, b))=-\eta(b)+\eta(b\circ a)+\eta(a\ast b)-\eta(a)$.
\end{center}
It is easy to observe that the contributions of all virtual crossings will cancel out. The proof is finished.
\end{proof}

\end{document}